\documentclass[12pt]{article}%
\usepackage{amssymb}
\usepackage{amsfonts}
\usepackage{amsmath}
\usepackage{bbm}
\usepackage{tikz}
\usetikzlibrary{positioning}
\usepackage{graphicx}%
\setcounter{MaxMatrixCols}{30}

\providecommand{\U}[1]{\protect\rule{.1in}{.1in}}

\setlength{\textheight}{240mm}
\setlength{\textwidth}{170mm}
\addtolength{\topmargin}{-2.5cm}
\addtolength{\oddsidemargin}{-2.4cm}

\newtheorem{theorem}{Theorem}[section]
\newtheorem{lemma}[theorem]{Lemma}

\newtheorem{proposition}[theorem]{Proposition}
\newcommand{\E}{{\mathbb E}}
\renewcommand{\P}{{\mathbb P}}
\newcommand{\R}{{\mathbb R}}
\newcommand{\eps}{{\varepsilon}}
\newcommand{\nlg}[1]{\ensuremath{{\rm nlg}\mathnormal{#1}}}
\newenvironment{proof}{\textbf{Proof.}}{\hfill $\Box$\\}
\begin{document}

\title{Explicit bounds for critical infection rates and expected extinction times of the contact process on finite random graphs }
\author{E. Cator and H. Don\thanks{Radboud University Nijmegen, Faculty of Science, P.O. Box 9010, 6500 GL
Nijmegen, The Netherlands; \emph{email}: E.Cator@science.ru.nl and H.Don@science.ru.nl},}

\maketitle

\begin{abstract}
	\noindent We introduce a method to prove metastability of the contact process on Erd\H{o}s-R\'enyi graphs and on configuration model graphs. The method relies on uniformly bounding the total infection rate from below, over all sets with a fixed number of  nodes. Once this bound is established, a simple comparison with a well chosen birth-and-death process will show the exponential growth of the extinction time. Our paper complements recent results on the metastability of the contact process: under a certain minimal edge density condition, we give explicit lower bounds on the infection rate needed to get metastability, and we have explicit exponentially growing lower bounds on the expected extinction time. 
\end{abstract}

\paragraph{Keywords:} Contact process, critical infection rate, extinction time, metastability.

\section{Introduction}\label{sec:intro}

The contact process as a model of epidemics on networks (introduced in \cite{harris74}) was first studied on infinite graphs like $\mathbb{Z}^d$ and the regular tree $\mathbb{T}_d$. Each node in the graph represents an individual, being either healthy or infected. In the latter case it infects each of its healthy neighbours at rate $\tau>0$ and heals at rate 1. All infections and healings are independent of the states of other vertices. 

One of the central questions for the process on infinite graphs is if the process dies out almost surely or not. See for instance the work of Holley and Liggett \cite{HolleyLiggett78}, showing existence of (and giving bounds for) a critical infection rate for the process on $\mathbb{Z}^d$. For infinite regular trees there even turn out to be \emph{two} critical values: the infection might survive with or without the root being infected infinitely often \cite{Pemantle92,Liggett96, Stacey96}. For an overview, we refer to \cite{liggett99,liggett05}.

Later finite random graphs were considered. In this case the process dies out almost surely, so the question is if this will take a `long' time or not. One usually takes a sequence of growing random graphs and looks at the extinction time as a function of $N$, the number of nodes in the graph. Long survival then means that the extinction time grows at least exponentially in $N$. In the recent paper \cite{BNNS19}, Bhamidi, Nam, Nguyen and Sly show that the process on the Erd\H{o}s-R\'enyi graph $G_{n,\sigma/n}$ for any $\sigma>1$ exhibits a phase transition in the following sense: if the infection rate is small enough, then the extinction time is of order $N^{1+o(1)}$ and if the infection rate is large enough, then the extinction time is exponential in $N$. Similar results are obtained for the configuration model. For constant degree the existence of a phase transition had been established before, see \cite{MV16}. For power law degree distributions however, there is no phase transition: any positive infection rate leads to exponential extinction times \cite{CD09}.

In the current work we aim to investigate the location of the phase transition. In the long survival phase, we also give lower bounds for the speed of exponential growth of the extinction time. The main idea in our approach is to lower bound the number of edges connecting a subset of the nodes to its complement. We search for uniform bounds, i.e. depending only on the size of the subset. If these lower bounds are sufficiently large for a range of sizes of subsets, this gives a positive drift to the number of infected individuals, and the expected extinction time will be exponential in the number of nodes. A more detailed explanation is given in section \ref{sec:coupling}.

We consider the dense and sparse regime of the Erd\H{o}s-R\'enyi graph and the contact process with arbitrary degree distribution. In all cases our main result is of the same flavour, and we expect similar results to hold for other types of random graphs. Consider the contact process with infection rate $\tau$ on a random graph model $G_N$ in which the expected degree of a random individual equals $\delta$. Both $\tau$ and $\delta$ might be functions of the number of nodes $N$. To guarantee existence of $\tau$ for which the extinction time $T_N$ will be exponential, we first of all need the graph to be sufficiently well connected. For instance, in the Erd\H{o}s-R\'enyi random graph, we need $\delta>1$, since otherwise the graph falls apart into small disjoint components. 

Our main results roughly say the following. For large $N$ and not too small $\delta$, exponential extinction times occur whenever $\tau\delta >1$. If $\tau\delta$ is large as well, then it generally should be true that the extinction time grows at least like $(\tau\delta)^N$. To be slightly more precise, for the graphs we consider, we prove existence of a function $f(\delta)=o(1)\,(\delta\rightarrow\infty)$ such that $\tau\delta>1+f(\delta)$ together with a proper `density condition' implies that
\begin{enumerate}
	\item The expected extinction time is (at least) exponential in $N$.
	\item There exists a function $g(\tau,\delta)=o(1)\,(\tau\delta\rightarrow\infty)$ such that 
\[
\liminf_{N\rightarrow\infty} \frac{\log(\mathbb{E}[T_N])}{N}\geq\log(\tau\delta) +g(\tau,\delta).
\]
\end{enumerate}
In section \ref{sec:ErdosRenyi} we derive these results for the Erd\H{o}s-R\'enyi graph where the density condition is that the average degree is at least $4\log(2)$. In fact this is a stronger condition than is needed for existence of a long survival phase, see \cite{BNNS19}. We leave it as an open question to extend our results to the full range $\delta>1$. In section \ref{sec:confmod}, we consider the configuration model with degree distribution $D$. In this case the density condition is $\mathbb{E}[2^{-\frac12 D}]<\frac12$. The lower bound on $\tau$ is somewhat implicit and depends on the degree distribution, and the second statement above only makes sense after specifying the degree distribution. For $D$ constant and $D\sim\text{Pois}(\mu)$, we demonstrate how to obtain an explicit lower bound on the expected extinction time.

\section{Coupling with a birth-death process}\label{sec:coupling}

The random graphs we will consider are not necessarily connected, which means that the contact process on the graph might be reducible and the contact process may die out quickly in some subset of nodes. For instance, in a sparse Erd\H{o}s-R\'enyi graph with constant expected degree, there will be a positive fraction of the nodes outside the largest connected component. If the infection vanishes in one of the components, it will never reappear there. However, if the infected fraction is not too close to 0 or 1 and the graph is not too sparse, then transitions to larger infected sets will be possible. The main point of our argument will be to control the probabilities that the infected set increases or decreases by looking at the number of links from infected nodes to healthy ones. In this analysis, we just consider the size of the infected set, rather than keeping track of which set exactly is infected. 

The contact process will live on a graph $G=(V,E)$. Let $A$ be the adjacency matrix of $G$, so $A_{ij} = 1$ if $\left\{i,j\right\}\in E$, and otherwise $A_{ij}=0$. An important characteristic of a subset of the nodes is the number of edges to its complement. This will be denoted by 
\[
L_S := \sum_{i\in S}\sum_{j\in S^c} A_{ij},\qquad S\subseteq V.
\]
Given the graph $G$, we can define the contact process $(I_t)_{t\geq 0}$. The state space is $\mathcal{S} = \mathcal{P}(V)$, the powerset of $V$. We define $\mathcal{S}_k:=\left\{S\subseteq V:|S|=k\right\}$, so that $\mathcal{S}=\bigcup_{k=0}^{|V|} \mathcal{S}_k$. We choose an infection rate $\tau$ and specify the transition rates:
\begin{equation}
\begin{array}{lccc}
{\rm For} S\subseteq V {\rm and } j\in S^c:\qquad & S\rightarrow S\cup\left\{j\right\} & \text{with rate} & \tau\sum_{i\in S} A_{ij},\\
\text{For $S\subseteq V$ and $j\in S$:}\qquad & S\rightarrow S\setminus\left\{j\right\} & \text{with rate} & 1.
\end{array}
\end{equation}
There is one absorbing state, namely the set $\emptyset$. We will be interested in the hitting time of this set, defined as $T = \inf\left\{t\geq 0:I_t=\emptyset\right\}$, which is also called the extinction time. If the graph is sufficiently well connected and $\tau$ is large enough, the process will exhibit almost stationary behaviour, and we say that there exists a metastable distribution. To properly define this notion, one usually lets the number of nodes in the graph increase. Existence of a metastable distribution is then reformulated to the fact that the expectation of $T$ grows exponentially in the number of nodes.

We will couple the process to a simple birth-death process $(P_t)_{t\geq 0}$ on a range $k_0,\ldots,k_1$. Assume this process to have strictly positive transition rates as in the diagram below.
\begin{center}
	\begin{tikzpicture}[-latex  ,node distance =3 cm and 3cm ,on grid ,
	semithick ,
	state/.style ={ circle ,top color =white , bottom color = white!20 ,
		draw, black , text=black , minimum width =1.25 cm}]
	\node[state] (1) {$k_0$};
	\node[state] (2) [right =of 1] {$k$};
	\node[state] (3) [right =of 2] {$k+1$};
	\node[state] (4) [right  =of 3] {$k_1$};
	\path (1) edge [dotted, bend left =20] node[below] {} (2);
	\path (2) edge [dotted, bend left =20] node[below] {} (1);
	\path (2) edge [bend left =20] node[above] {$\lambda_k$} (3);
	\path (3) edge [bend left =20] node[below] {$k+1$} (2);
	\path (3) edge [dotted, bend left =20] node[below] {} (4);
	\path (4) edge [dotted, bend left =20] node[below] {} (3);
	\end{tikzpicture}
\end{center}  

\begin{lemma}\label{lem:coupling}
Let $G=(V,E)$ be a graph and let $k_0,k_1\in\mathbb{N}$ be such that $0<k_0<k_1\leq |V|$. Suppose for $k_0\leq k<k_1$, there exists $M_k\in\mathbb{N}$ such that $L_S\geq M_{k}\geq 1$ for all $S\in\mathcal{S}_k$.

 Take $\lambda_k = \tau M_k$. Let $I_0=V$ and $P_0 = k_1$. Then there exists a coupling between $(I_t)_{t\geq 0}$ and $(P_t)_{t\geq 0}$ such that $|I_t|\geq P_t$ for all $0\leq t\leq \inf\left\{s\geq 0 : P_s = k_0\right\}$.
\end{lemma}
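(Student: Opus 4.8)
The plan is to realize the two processes as the marginals of a single continuous-time Markov chain on the joint space $\{(S,p) : S \subseteq V,\ p \in \{k_0,\dots,k_1\}\}$, started at $(V,k_1)$, and to build it so that the region $\{|S| \ge p\}$ is never left before $\tau_0 := \inf\{s : P_s = k_0\}$. Both coordinates live on finite sets, so there is no explosion and it suffices to prescribe consistent joint jump rates; the task is then purely to choose which jumps to make simultaneous.

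The observation driving the construction is that, because every jump changes a coordinate by exactly $1$, the invariant $|I_t| \ge P_t$ can only be threatened on the diagonal $|S| = p$. I would therefore let the two coordinates evolve independently, each with its own marginal rates, whenever $|S| > p$, and introduce couplings only on the diagonal $|S| = p =: k$. There I enforce two matchings. For births, I pair every up-jump of $P$ (which has rate $\lambda_k = \tau M_k$ when $k < k_1$) with a simultaneous infection in $I$; this is possible exactly because the total infection rate out of $S$ is $\tau L_S \ge \tau M_k = \lambda_k$ by the hypothesis $L_S \ge M_k$, so one can thin the infection transitions, letting a sub-rate $\lambda_k$ of them carry $P$ upward as well and the residual rate $\tau(L_S - M_k)\ge 0$ move $I$ alone. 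For deaths, I pair every recovery of $I$ (total rate $|S| = k$) with a simultaneous death of $P$; this is possible because the death rate of $P$ at level $k$ is also exactly $k$, so the two down-rates coincide and can be matched event for event. A short check confirms that these rules leave the marginal generators unchanged, on and off the diagonal.

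Granting the construction, the invariant follows by inspecting the diagonal transitions: an up-jump takes $(k,k)$ to $(k+1,k+1)$ or (the $I$-alone part) to $(k+1,k)$, and a down-jump takes $(k,k)$ to $(k-1,k-1)$, so $|I| \ge P$ survives every move, while off the diagonal a single $\pm1$ step from $|S| \ge p+1$ cannot break the inequality. The hypothesis is used solely in the birth-matching and only at diagonal levels $k < k_1$ where a $P$-birth exists; and the restriction to times up to $\tau_0$ is exactly what the death-matching forces, since at level $k_0$ the process $P$ has no down-jump available to accompany an $I$-recovery on the diagonal, so the invariant cannot be guaranteed beyond that point. I expect the main obstacle to be none of the individual steps but rather the bookkeeping that ties them together: verifying that the thinned infection rates and the event-by-event pairing of recoveries with $P$-deaths reproduce both marginals exactly, and that the construction patches together consistently at the interior/diagonal interface and at the top corner $k = p = k_1$, where $P$ cannot rise and every infection moves $I$ alone. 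This is where a flawed coupling would most likely conceal an error, so it is the part I would write out in full.
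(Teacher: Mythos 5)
Your construction is exactly the paper's coupling: both processes evolve independently off the diagonal, and on the diagonal $|I_t|=P_t=k$ the recoveries of $I$ (total rate $k$) are matched event-for-event with the deaths of $P$ (rate $k$), while the infections of $I$ (total rate $\tau L_{I_t}\geq \lambda_k$) are thinned so that a sub-rate $\lambda_k$ of them carries $P$ upward as well, which is precisely the paper's rule of letting $P$ increase with probability $M_k/L_{I_t}$ at each infection event. The only difference is presentational --- the paper realizes the diagonal transitions via competing exponential clocks $Z_1/k$ versus $Z_2/(\tau L_{I_t})$ rather than a joint generator --- so the two arguments coincide.
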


\begin{proof} We let $P_t$ and $I_t$ develop independently of each other, until $|I_t|=P_t$; suppose at that time $P_t=|I_t|=k\in \{k_0+1,\ldots,k_1\}$. Now draw independently $Z_1,Z_2\sim {\rm Exp}(1)$. If $Z_1/k < Z_2/(\tau L_{I_t})$, we randomly remove one infected node from $I_t$ to get $I_{t+Z_1/k}$. Furthermore, $P_{t+Z_1/k}=k-1$. If however $Z_1/k > Z_2/(\tau L_{I_t})$, then we choose a node from $I_t^c$ with probability proportional to the infection rate from $I_t$ to that node, and get $I_{t+Z_2/(\tau L_{I_t})}$ by adding this node. Also, we take $P_{t+Z_2/(\tau L_{I_t})}=P_t+1$ with probability $M_k/L_{I_t}$, and otherwise $P_{t+Z_2/(\tau L_{I_t})}=P_t$. Then we proceed with both processes. Since $\tau L_{I_t}\geq \lambda_k$, $P_t$ follows the correct distribution, and with our (quite natural) coupling we see that $P_t\leq |I_t|$, for all $t\leq \inf\left\{s\geq 0 : P_s = k_0\right\}$.
\end{proof}

The following lemma bounds the expected hitting time of $k_0$ in the birth-death process.

\begin{lemma}\label{lem:hitting} Let $T^{k_0}:=\inf\left\{t\geq 0:P_t = k_0\right\}$ and let $H_{k} = \mathbb{E}[T^{k_0}\mid P_0=k]$ for $k_0\leq k\leq k_1$. Then for all $k_0 < k\leq k_1$,
\[
\frac1 k_1\prod_{i=k_0+1}^{k_1-1}\frac{\lambda_i}{i} \leq H_{k} \leq \frac12 (k_1-k_0)^2 \max_{k_0\leq k\leq j\leq k_1-1} \prod_{i=k+1}^{j} \frac{\lambda_i}{i}.
\]
\end{lemma}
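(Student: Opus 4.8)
The plan is to compute $H_k$ by first-step analysis on the birth-death chain, where state $k$ has up-rate $\lambda_k$ and down-rate $\mu_k=k$, then solve the resulting linear recursion in closed form and read off both inequalities from the explicit expression. First I would record the boundary data: since we stop on reaching $k_0$ we have $H_{k_0}=0$, while at the top state $k_1$ no up-transition is available (it reflects), so conditioning on the first jump there gives $H_{k_1}=\frac{1}{k_1}+H_{k_1-1}$. For the interior states $k_0<k<k_1$, conditioning on whether the first jump is up or down yields
\[
(\lambda_k+k)H_k = 1 + \lambda_k H_{k+1} + k\,H_{k-1}.
\]

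The key simplification is to pass to the increments $g_k:=H_k-H_{k-1}$. The interior equation rearranges into the first-order backward recursion $g_k=\frac1k+\frac{\lambda_k}{k}\,g_{k+1}$, and the reflecting condition supplies the terminal value $g_{k_1}=\frac{1}{k_1}$. Iterating downward from $k_1$ gives the closed form
\[
g_k=\sum_{j=k}^{k_1}\frac1j\prod_{i=k}^{j-1}\frac{\lambda_i}{i},
\]
and telescoping against $H_{k_0}=0$ produces
\[
H_k=\sum_{m=k_0+1}^{k}\ \sum_{j=m}^{k_1}\frac1j\prod_{i=m}^{j-1}\frac{\lambda_i}{i},
\]
in which every summand is strictly positive.

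Both inequalities then fall out of this formula. For the lower bound I would discard all terms except the single summand with $m=k_0+1$ and $j=k_1$, which equals $\frac{1}{k_1}\prod_{i=k_0+1}^{k_1-1}\frac{\lambda_i}{i}$ and is present whenever $k>k_0$. For the upper bound I would bound each inner product by $\max_{k_0\le k'\le j'\le k_1-1}\prod_{i=k'+1}^{j'}\frac{\lambda_i}{i}$ (the reindexing $k'=m-1,\ j'=j-1$ matches the admissible ranges exactly), use $\frac1j\le\frac12$ since $j\ge m\ge k_0+1\ge2$, and count at most $(k_1-k_0)^2$ pairs $(m,j)$. I expect the only real obstacle to be bookkeeping: the decision to reduce the second-order recursion to a first-order one in the increments and to solve it \emph{backward} from the reflecting end (solving forward from $k_0$ fails because the initial increment $g_{k_0+1}$ is not known a priori), together with correctly matching the product indices in the closed form to the index ranges appearing in the stated maximum.
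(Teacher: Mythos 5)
Your proof is correct, and while it solves the same first-step-analysis linear system as the paper, it organizes the computation differently. The paper follows the textbook route: it writes $H_k = a_k H_{k_0+1}+b_k$, solves coupled recursions for the increments of the coefficients $a_k$ and $b_k$, and only then uses the reflecting condition at $k_1$ to pin down the unknown $H_{k_0+1}$; the lower and upper bounds are then extracted from the resulting expressions for $H_{k_0+1}$ and $H_{k_1}$ together with the monotonicity facts $H_k\geq H_{k_0+1}$ and $H_k\leq H_{k_1}$, which the paper asserts without proof (they do hold, by the strong Markov property, since the chain is skip-free and must pass through $k_0+1$, respectively through $k$, on its way to $k_0$). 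You instead pass to the increments $g_k=H_k-H_{k-1}$ and solve the first-order recursion \emph{backward} from the reflecting end, where $g_{k_1}=1/k_1$ is known, obtaining the explicit positive-term representation
\[
H_k=\sum_{m=k_0+1}^{k}\ \sum_{j=m}^{k_1}\frac1j\prod_{i=m}^{j-1}\frac{\lambda_i}{i},
\]
which agrees term by term with the paper's formula for $H_{k_1}$ once one notes $\frac{1}{\lambda_j}\prod_{i=k+1}^{j}\frac{\lambda_i}{i}=\frac1j\prod_{i=k+1}^{j-1}\frac{\lambda_i}{i}$. What your route buys is that both inequalities are read off directly for \emph{every} $k>k_0$: the lower bound by keeping the single summand $(m,j)=(k_0+1,k_1)$, the upper bound by bounding each product by the stated maximum, using $\frac1j\leq\frac12$ (valid since $k_0\geq 1$), and counting at most $(k_1-k_0)^2$ pairs. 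In particular the monotonicity of $k\mapsto H_k$ becomes a byproduct of the formula rather than an extra ingredient, so your argument is slightly more self-contained than the paper's, at the cost of no additional length.
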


\begin{proof} The expected hitting times satisfy the following linear system, see e.g. \cite{norris}:
	\begin{align}\label{eq:hitlinsyst}
		H_k = \left\{\begin{array}{ll}
			0 & k = k_0,\\
			\frac{1}{k+\lambda_k}(1+k H_{k-1}+\lambda_k H_{k+1})\qquad &k_0<k<k_1,\\
			\frac{1}{k}+H_{k-1},&k=k_1.
		\end{array}
		\right.
	\end{align}
	For $k_0\leq k\leq k_1$, write $H_k = a_k H_{k_0+1}+b_k$, with initial conditions $a_{k_0}=b_{k_0}=b_{k_0+1}=0$ and $a_{k_0+1}=1$. Rewriting the second recursive relation in (\ref{eq:hitlinsyst}) then gives
	\begin{align*}
		H_{k+1} &= \frac{k+\lambda_k}{\lambda_k}\left(a_kH_{k_0+1}+b_k-\frac{k}{k+\lambda_k}(a_{k-1}H_{k_0+1}+b_{k-1})-\frac{1}{k+\lambda_k}\right)\\
		&= \left(a_k+\frac{k}{\lambda_k}\left(a_k-a_{k-1}\right)\right)H_{k_0+1}+\left(b_k+\frac{k}{\lambda_k}(b_k-b_{k-1})-\frac{1}{\lambda_k}\right)\\
		&= a_{k+1} H_{k_0+1}+b_{k+1}.
	\end{align*}
	for $k_0< k< k_1$. So we find
	\begin{align*}
		a_{k+1}-a_k &= (a_{k_0+1}-a_{k_0})\prod_{i=k_0+1}^{k}\frac{i}{\lambda_i}=\prod_{i=k_0+1}^{k}\frac{i}{\lambda_i},\\
		b_{k+1}-b_k &= (b_{k_0+1}-b_{k_0})\prod_{i=k_0+1}^k\frac{i}{\lambda_i}-\sum_{j=k_0+1}^k\frac{1}{\lambda_j}\prod_{i=j+1}^k\frac{i}{\lambda_i} = -\sum_{j=k_0+1}^k\frac{1}{\lambda_j}\prod_{i=j+1}^k\frac{i}{\lambda_i}. 
	\end{align*}
	Finally, the third equation in (\ref{eq:hitlinsyst}) gives
	\begin{align*}
		\frac{1}{k_1}&= (a_{k_1}-a_{k_1-1}) H_{k_0+1} +(b_{k_1}-b_{k_1-1}),
	\end{align*}
	so that
	\[
	H_{k_0+1} = \left[\frac{1}{k_1}+\sum_{j=k_0+1}^{k_1-1}\frac{1}{\lambda_j}\prod_{i=j+1}^{k_1-1}\frac{i}{\lambda_i}\right]\times \prod_{i=k_0+1}^{k_1-1}\frac{\lambda_i}{i}\geq \frac{1}{k_1}\prod_{i=k_0+1}^{k_1-1}\frac{\lambda_i}{i}.
	\]
	Noting that $H_k\geq H_{k_0+1}$ for all $k_0<k\leq k_1$ completes the proof of the lower bound.
		
Next, we give an upper bound for the hitting times. First observe that
\begin{align*}
H_{k_1} &= a_{k_1} H_{k_0+1}+b_{k_1} = \sum_{k=k_0}^{k_1-1}(a_{k+1}-a_k) H_{k_0+1}+\sum_{k=k_0}^{k_1-1}(b_{k+1}-b_k).
\end{align*}
The first term can be written as
\begin{align*}
\sum_{k=k_0}^{k_1-1}(a_{k+1}-a_k) H_{k_0+1} 
&
= \left(\sum_{k=k_0}^{k_1-1} \prod_{i=k+1}^{k_1-1} \frac{\lambda_i}{i}\right)\times\left(\frac{1}{k_1}+\sum_{j=k_0+1}^{k_1-1}\frac{1}{\lambda_j}\prod_{i=j+1}^{k_1-1}\frac{i}{\lambda_i}\right)\\
&  = \frac{1}{k_1}\sum_{k=k_0}^{k_1-1} \prod_{i=k+1}^{k_1-1} \frac{\lambda_i}{i} + \sum_{k=k_0}^{k_1-1}\left(-(b_{k+1}-b_k)+\sum_{j=k+1}^{k_1-1}\frac{1}{\lambda_j}\prod_{i=k+1}^j\frac{\lambda_i}{i}\right)
\end{align*}
so that
\begin{align*}
H_{k_1}&=\frac{1}{k_1}\sum_{k=k_0}^{k_1-1} \prod_{i=k+1}^{k_1-1} \frac{\lambda_i}{i} + \sum_{k_0\leq k<j\leq k_1-1}\frac{1}{\lambda_j}\prod_{i=k+1}^j\frac{\lambda_i}{i}\\
&\leq \frac{1}{k_1}\sum_{k=k_0}^{k_1-1} \prod_{i=k+1}^{k_1-1} \frac{\lambda_i}{i} + \sum_{k_0\leq k<j\leq k_1-1}\prod_{i=k+1}^{j-1}\frac{\lambda_i}{i}\\
&\leq \frac{k_1-k_0}{k_1}\max_{k_0\leq k\leq k_1-1} \prod_{i=k+1}^{k_1-1} \frac{\lambda_i}{i} + \binom{k_1-k_0}{2}\max_{k_0\leq k<j\leq k_1-1} \prod_{i=k+1}^{j-1} \frac{\lambda_i}{i}\\
&\leq \frac12 (k_1-k_0)^2 \max_{k_0\leq k\leq j\leq k_1-1} \prod_{i=k+1}^{j} \frac{\lambda_i}{i}.
\end{align*}
The conclusion follows by observing that $H_k\leq H_{k_1}$ for all $k_0<k\leq k_1$.

\end{proof}

The coupling between the two processes reveals that uniform lower bounds on numbers of edges between sets of nodes suffice to get a lower bound on the extinction time of the contact process. The next proposition summarizes this result and will be applied on two different graph models in the next sections.

\begin{proposition}\label{prop:uniform_bound}
Let $G=(V,E)$ be a graph and let $k_0,k_1\in\mathbb{N}$ be such that $0<k_0<k_1\leq |V|$. Suppose for $k_0\leq k<k_1$, there exists $M_k\in\mathbb{N}$ such that $L_S\geq M_{k}\geq 1$ for all $S\in\mathcal{S}_k$. Then the extinction time $T$ of the contact process $(I_t)_{t\geq 0}$ on $G$ with $I_0=V$ satisfies
\[
\mathbb{E}[T] \geq \frac{1}{k_1}\prod_{k=k_0+1}^{k_1-1}\frac{\tau M_k}{k}.
\]
\end{proposition}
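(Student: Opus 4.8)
The plan is simply to chain the two preceding lemmas together. Since the hypotheses here coincide exactly with those of Lemma \ref{lem:coupling}, I would first set $\lambda_k = \tau M_k$ for $k_0 \leq k < k_1$ and apply that lemma to obtain a coupling of the contact process $(I_t)_{t\geq 0}$, started at $I_0 = V$, with the birth-death process $(P_t)_{t\geq 0}$, started at $P_0 = k_1$, satisfying $|I_t| \geq P_t$ for all $0 \leq t \leq T^{k_0}$, where $T^{k_0} := \inf\{s \geq 0 : P_s = k_0\}$.

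The crux is to convert this domination into a comparison of hitting times. I observe that $P$ moves by unit steps on $\{k_0, \ldots, k_1\}$, so to reach a value below $k_0$ it would first have to hit $k_0$; hence on the whole interval $[0, T^{k_0}]$ we have $P_t \geq k_0 \geq 1$. Combined with the coupling this gives $|I_t| \geq P_t \geq 1$, so that $I_t \neq \emptyset$ throughout this interval. Consequently the contact process cannot reach the absorbing state $\emptyset$ before time $T^{k_0}$, i.e.\ $T := \inf\{t \geq 0 : I_t = \emptyset\} \geq T^{k_0}$ almost surely under the coupling. Taking expectations (the coupling preserves the marginal laws) yields $\mathbb{E}[T] \geq \mathbb{E}[T^{k_0} \mid P_0 = k_1] = H_{k_1}$ in the notation of Lemma \ref{lem:hitting}.

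It then remains to invoke the lower bound of Lemma \ref{lem:hitting}, namely $H_{k_1} \geq \frac{1}{k_1} \prod_{i=k_0+1}^{k_1-1} \frac{\lambda_i}{i}$, and to substitute $\lambda_i = \tau M_i$ to recover the claimed inequality. Because both lemmas are already in hand, there is no genuine obstacle; the argument is essentially bookkeeping. The one place warranting care is the hitting-time comparison: the domination $|I_t| \geq P_t$ is only asserted up to $T^{k_0}$, so I must check that this is precisely the range needed. It is, because the hypothesis $k_0 \geq 1$ keeps $P_t$, and therefore $|I_t|$, bounded away from $0$ on exactly this range, which is all that is required to guarantee non-extinction before $T^{k_0}$.
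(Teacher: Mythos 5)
Your proposal is correct and follows essentially the same route as the paper: invoke the coupling of Lemma \ref{lem:coupling} with $\lambda_k=\tau M_k$, deduce $T\geq T^{k_0}$ from the domination $|I_t|\geq P_t$ (the paper phrases this as $T\geq\inf\{t:|I_t|=k_0\}\geq\inf\{t:P_t=k_0\}$), and then apply the lower bound of Lemma \ref{lem:hitting}. Your explicit check that the domination up to time $T^{k_0}$ suffices, because $P_t\geq k_0\geq 1$ on that interval, is a careful spelling-out of what the paper leaves implicit.
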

\begin{proof}
Couple the process $(I_t)_{t\geq 0}$ with the birth-death process $(P_t)_{t\geq 0}$ with $P_0=k_1$ as in Lemma \ref{lem:coupling}. Let $T^{k_0}=\inf\left\{t\geq 0:P_t=k_0\right\}$. Then 
\[
T = \inf\left\{t\geq 0: I_t = \emptyset \right\}\geq \inf\left\{t\geq 0: |I_t| = k_0 \right\}\geq \inf\left\{t\geq 0: P_t = k_0 \right\} = T^{k_0}.
\]	
Taking expectations, Lemma \ref{lem:hitting} gives the lower bound for $\mathbb{E}[T]$.
\end{proof}

\section{Metastability for the contact process on the Erd\H{o}s-R\'enyi graph}\label{sec:ErdosRenyi}

In this section we will derive sufficient conditions for long survival of the contact process on the Erd\H{o}s-R\'enyi graph model $G_{N,p}$. We will be interested in the limit for $N$ tending to infinity and we allow the edge probability $p$ to be a function of $N$. Also the infection rate $\tau$ might be a function of $N$. We will consider the supercritical regime in which the graph has a giant component containing a positive fraction of the nodes. This regime will be split into a dense case in which the average degree $Np$ goes to infinity and a sparse case in which $Np = \sigma$ for some constant $\sigma$.

For $Np$ exceeding $4\log(2)$, we give a lower bound on the infection rate $\tau$ which is sufficient for long survival. For large degrees, this bound on $\tau$ is close to $1/(Np)$. If $ Np\tau$ is large as well, the extinction times grow almost like $(Np\tau)^N$. However, if $Np\tau$ is close to $1$, there will be correction terms in the growth. We will also show that the same correction terms occur when considering the contact process on the complete graph.

\subsection{The dense case: $Np\rightarrow \infty$}\label{sec:dense}

We will consider the contact process on a sequence of random graphs, for which we write $G_N = (V_N,E_N)$. When we start the contact process, we fix the randomly chosen graph. As discussed in Section \ref{sec:coupling}, the first goal is to find a uniform lower bound on the number of edges between sets $S\subseteq V_N$ of size $k$ and their complements. For given $S$, we denote the number of links between $S$ and $S^c$ by $L_S$. As the graph is random and depends on $N$, we will aim for a lower bound $M_{N,k}$ that is valid with probability tending to $1$ as $N$ to infinity. 

Since the condition $Np\rightarrow\infty$ is not sufficient to have a connected graph, there will be strict subsets of $V_N$ that do not have any links to their complement. However, these problematic sets are either very small or very large. We therefore choose a constant $\gamma \in (0,\frac12)$, and we will only consider sets of size $k$ with $\gamma\leq \frac k N\leq 1-\gamma$. In the next lemma we give uniform lower bounds on the number of outgoing links of such sets that hold with high probability, i.e. tending to 1 as $N\to\infty$. In the dense regime, the constant $\gamma$ can be taken as small as we wish.

\begin{lemma}\label{lem:lowerbounddense}
Consider the Erd\H{o}s-R\'enyi random graph sequence $G_N:=G_{N,p}= (V_N,E_N)$ with edge probability $p = p(N)$ and $\lim_{N\rightarrow\infty} Np = \infty$. Let $\gamma \in (0,\frac12)$ and $\rho\in (0,1)$. Then, with high probability,
\[
L_S\geq M_{N,k}:= \rho pk(N-k)
\]
for all $k$ satisfying $\gamma N\leq k\leq (1-\gamma)N$ and all $S\in\mathcal{S}_k$.
\end{lemma}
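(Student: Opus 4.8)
The plan is to treat $L_S$ for a fixed $S \in \mathcal{S}_k$ as a sum of independent indicators and then pay for the uniformity over all sets with a union bound. For fixed $S$ with $|S| = k$, the quantity $L_S = \sum_{i \in S}\sum_{j \in S^c} A_{ij}$ is a sum of $k(N-k)$ independent $\mathrm{Bernoulli}(p)$ random variables, hence $L_S \sim \mathrm{Binomial}(k(N-k),p)$ with mean $\mu_k := pk(N-k)$. Since we want a lower bound of the form $\rho\mu_k$ with $\rho < 1$, this is precisely a lower-tail large deviation. First I would apply the multiplicative Chernoff bound for the lower tail: for $0 < \rho < 1$,
\[
\mathbb{P}\bigl(L_S \le \rho\mu_k\bigr) \le \exp\Bigl(-\tfrac12 (1-\rho)^2 \mu_k\Bigr) = \exp\Bigl(-\tfrac12 (1-\rho)^2\, p\,k(N-k)\Bigr).
\]

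Next I would union bound over all admissible sets. Writing the bad event as the existence of some $k$ with $\gamma N \le k \le (1-\gamma)N$ and some $S \in \mathcal{S}_k$ with $L_S < \rho\mu_k$,
\[
\mathbb{P}(\text{bad}) \le \sum_{\gamma N \le k \le (1-\gamma)N} \binom{N}{k} \exp\Bigl(-\tfrac12 (1-\rho)^2\, p\,k(N-k)\Bigr).
\]
On the range $\gamma N \le k \le (1-\gamma)N$ the parabola $k(N-k)$ is minimized at the endpoints, so $k(N-k) \ge \gamma(1-\gamma)N^2$. Bounding each binomial factor crudely by $\sum_k \binom{N}{k} \le 2^N$, I obtain
\[
\mathbb{P}(\text{bad}) \le \exp\Bigl(N\log 2 - \tfrac12 (1-\rho)^2\,\gamma(1-\gamma)\, pN^2\Bigr) = \exp\Bigl(N\bigl(\log 2 - \tfrac12 (1-\rho)^2\,\gamma(1-\gamma)\, Np\bigr)\Bigr).
\]
Because $Np \to \infty$, the parenthesized factor tends to $-\infty$, so the whole expression tends to $0$, which is exactly the claimed high-probability statement. (Since $L_S$ is integer-valued, $L_S \ge \rho pk(N-k)$ yields $L_S \ge M_{N,k}$ after rounding up, so integrality of the threshold causes no difficulty here.)

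The step carrying all the weight is the interplay between the entropy term $N\log 2$ coming from the $\binom{N}{k}$ sets and the Chernoff decay rate $\tfrac12(1-\rho)^2 p\,k(N-k)$ per set. The crucial point is that the mean $\mu_k$ scales like $pN^2 = N\cdot(Np)$, so the per-set decay rate is of order $N\cdot Np$, which outgrows the order-$N$ entropy precisely because $Np \to \infty$. This is where the density hypothesis is indispensable and why $\gamma$ may be chosen arbitrarily small in this regime: the ratio of decay rate to entropy blows up regardless of how small $\gamma(1-\gamma)$ is. In the sparse case $Np = \sigma$ the two exponential rates become comparable, and one can anticipate that $\gamma$ must then be held away from $0$ and a sharper entropy estimate (the binary-entropy bound $\binom{N}{k} \le \exp(N H(k/N))$ rather than $2^N$) will be required.
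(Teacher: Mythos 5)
Your proof is correct and follows essentially the same route as the paper's: a multiplicative Chernoff lower-tail bound for the binomial variable $L_S$ with rate $\tfrac12(1-\rho)^2 pk(N-k) \geq \tfrac12(1-\rho)^2\gamma(1-\gamma)pN^2$, followed by a union bound over sets and sizes paid for with $2^N$, with the condition $Np\to\infty$ making the exponent diverge to $-\infty$. The only (immaterial) difference is that you absorb the sum over $k$ into the single bound $\sum_k \binom{N}{k}\leq 2^N$, whereas the paper bounds each $\binom{N}{k}$ by $2^N$ and pays an extra harmless factor $e^{\log N}$.
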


\begin{proof}
Fix $N$, $\gamma\in (0,\frac{1}2)$ and $k$ such that $\gamma N\leq k\leq (1-\gamma)N$. Also fix $S\in\mathcal{S}_k$ and consider the edges in the Erd\H{o}s-R\'enyi graph to be random. Clearly, the number of links between $S$ and $S^c$ has a binomial distribution:
\[
L_S \sim {\rm Bin}(k(N-k),p).
\]
Now use Chernoff's bound to obtain
\[
\mathbb{P}\left(L_S\leq \rho \mathbb{E}[L_S]\right)\leq e^{-\frac12(1-\rho)^2\mathbb{E}[L_S]}=e^{-\frac12(1-\rho)^2pk(N-k)} \leq e^{-\frac12(1-\rho)^2\gamma(1-\gamma)pN^2}.
\]
Bounding the binomial coefficient by $2^N$, we conclude that
\begin{align*}
\mathbb{P}(\exists S\in\mathcal{S}_k:L_S\leq \rho\mathbb{E}[L_S])&\leq \binom{N}{k}\mathbb{P}(L_S\leq \rho\mathbb{E}[L_S])\\
&\leq e^{\left(\log(2)-\frac12(1-\rho)^2\gamma(1-\gamma)pN\right)\cdot N} 
\end{align*}
Finally, we sum over $k$ and take $M_{N,k} = \rho pk(N-k)$, leading to
\[
\mathbb{P}(\exists\ \gamma N\leq k\leq (1-\gamma)N\ \exists\ S\in\mathcal{S}_k:L_S\leq M_{N,k})\leq e^{\log(N)+\left(\log(2)-\frac12(1-\rho)^2\gamma(1-\gamma)pN\right)\cdot N}. 
\]
So for $N$ large, we have with high probability that for all $\gamma N\leq k\leq (1-\gamma)N$ and all $S\in\mathcal{S}_k$ the number of links between $S$ and $S^c$ is bounded from below:
\[
L_S\geq M_{N,k} = \rho pk(N-k).
\]
\end{proof}

When we consider the graph to be random, the expected extinction time is a random variable. The next theorem gives lower bounds for this expected value that hold with high probability (so we will get a \emph{quenched} statement). It turns out that even superexponential extinction times are possible. 

\begin{theorem}\label{theorem:ERdense}
	Consider the Erd\H{o}s-R\'enyi random graph sequence $G_N:=G_{N,p}= (V_N,E_N)$ with edge probability $p = p(N)$ and $\lim_{N\rightarrow\infty} Np = \infty$. Let $(I^N_t)_{t\geq 0}$ be the contact process on $G_N$ with $I_0^N=V_N$ and infection rate $\tau = \tau(N)$. Let $T_N$ be the extinction time of the process. 
	\begin{enumerate}
		\item Let the infection rate $\tau$ be such that $N p\tau\rightarrow\infty$. Take $\varepsilon\in(0,1)$. Then with high probability
		\[
		\mathbb{E}[T_N] \geq e^{(1-\varepsilon)\log(Np\tau)N}.
		\] 
		\item Let $\lambda>1$ be a constant and let $\tau$ satisfy $Np\tau = \lambda$. Take $\varepsilon\in (0,1)$. Then with high probability
		\[
		\mathbb{E}[T_N] \geq e^{\left((1-\varepsilon)\log(\lambda)+\frac 1\lambda-1\right)\cdot N}.
		\]
	\end{enumerate}
The expectation in this theorem is taken only over the randomness of the contact process, so the graph is fixed.
\end{theorem}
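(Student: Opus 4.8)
The plan is to combine the uniform edge bound of Lemma \ref{lem:lowerbounddense} with the hitting-time estimate of Proposition \ref{prop:uniform_bound}. Fix $\gamma\in(0,\frac12)$ and $\rho\in(0,1)$, and work on the high-probability event (over the randomness of the graph) that $L_S\geq M_{N,k}:=\rho pk(N-k)$ holds simultaneously for all $\gamma N\leq k\leq(1-\gamma)N$ and all $S\in\mathcal{S}_k$. On this event the hypotheses of Proposition \ref{prop:uniform_bound} are met for any $k_0,k_1$ with $\gamma N\leq k_0<k_1\leq(1-\gamma)N+1$ (I would take $M_k=\lceil\rho pk(N-k)\rceil$, a positive integer for large $N$ since $pN\to\infty$). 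The crucial simplification is that the factors in the product collapse,
\[
\frac{\tau M_k}{k}\geq\frac{\tau\rho pk(N-k)}{k}=\rho p\tau(N-k)=\rho(Np\tau)\left(1-\tfrac{k}{N}\right).
\]
Writing $\Lambda=Np\tau$, each factor equals $\rho\Lambda(1-k/N)$, which exceeds $1$ exactly when $k/N<1-1/(\rho\Lambda)$; this crossover location is what distinguishes the two parts.

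For Part 1 I would take $k_0=\lceil\gamma N\rceil$ and $k_1=\lfloor(1-\gamma)N\rfloor$. Since $\Lambda\to\infty$, every factor is eventually at least $1$, so including all of them only helps. Taking logarithms in Proposition \ref{prop:uniform_bound} and dividing by $N$, the sum $\frac1N\sum_{k=k_0+1}^{k_1-1}\log(\rho\Lambda(1-k/N))$ is a Riemann sum whose leading term is $(1-2\gamma)\log(\rho\Lambda)$, while the integral $\int_\gamma^{1-\gamma}\log(1-x)\,dx$, the $\log\rho$ contribution, and the boundary term $-\tfrac1N\log k_1=O(\tfrac{\log N}{N})$ only produce an $O(1)$ constant. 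Because $\log\Lambda\to\infty$, choosing $\gamma<\varepsilon/2$ makes $(1-2\gamma)\log(\rho\Lambda)+O(1)\geq(1-\varepsilon)\log\Lambda$ for all large $N$, which is the claim.

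For Part 2, where $\Lambda=\lambda$ is a fixed constant greater than $1$, the constant corrections can no longer be discarded and the choice of $k_1$ becomes decisive. Since the factors fall below $1$ once $k/N$ passes $1-1/(\rho\lambda)$, truncating the product precisely there maximizes it, so I would take $k_0=\lceil\gamma N\rceil$ and $k_1=\lfloor N(1-1/(\rho\lambda))\rfloor$ (legitimate once $\gamma<1/(\rho\lambda)$ and $\rho\lambda>1$, arranged by taking $\rho$ close to $1$). Using $\int_\gamma^{1-1/(\rho\lambda)}\log(1-x)\,dx=\frac{1}{\rho\lambda}+\frac{1}{\rho\lambda}\log(\rho\lambda)-(1-\gamma)+(1-\gamma)\log(1-\gamma)$, the Riemann-sum limit yields, for fixed $\gamma,\rho$,
\[
\liminf_{N\to\infty}\frac{\log\mathbb{E}[T_N]}{N}\geq(1-\gamma)\log(\rho\lambda)+\frac{1}{\rho\lambda}-(1-\gamma)+(1-\gamma)\log(1-\gamma).
\]
Letting $\gamma\downarrow0$ and $\rho\uparrow1$, the right-hand side tends to $\log\lambda+1/\lambda-1$, so for the given $\varepsilon$ one fixes $\gamma$ small and $\rho$ close to $1$ to obtain the stated bound for all large $N$ on the high-probability event.

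I expect the main obstacle to be Part 2: the observation that the product must be truncated at the crossover $k\approx N(1-1/(\rho\lambda))$ rather than at $(1-\gamma)N$ is exactly what produces the extra $1/\lambda$ term (omitting the sub-unit factors), and extracting it cleanly requires the explicit antiderivative of $\log(1-x)$ together with careful bookkeeping of the $\gamma$- and $\rho$-dependent remainders so that they vanish in the double limit. Controlling the error between the finite product and its limiting integral — including the $O(\log N)$ boundary term, which is $o(N)$ — is routine, but must be kept uniform enough that the single high-probability event from Lemma \ref{lem:lowerbounddense} carries the entire quenched conclusion.
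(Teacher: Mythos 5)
Your proposal is correct and follows essentially the same route as the paper: both parts combine Lemma \ref{lem:lowerbounddense} with Proposition \ref{prop:uniform_bound}, absorb the $O(1)$ corrections into $\log(Np\tau)\to\infty$ for Part 1, and for Part 2 truncate the product near the crossover $k/N\approx 1-1/\lambda$ and compare the resulting sum with $\int\log(1-x)\,dx$ to produce the $\tfrac1\lambda-1$ term. The only differences are cosmetic bookkeeping (you truncate at $1-1/(\rho\lambda)$ and take a double limit $\gamma\downarrow 0$, $\rho\uparrow 1$, whereas the paper fixes $k_1=\lfloor(1-\tfrac1\lambda)N\rfloor$ and $\rho>\lambda^{-\varepsilon/3}$ with explicit $\varepsilon/3$ splits), plus your welcome attention to the integrality of $M_k$, which the paper glosses over.
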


\begin{proof}
To prove the first statement, we let $\tau$ be such that $Np\tau\rightarrow\infty$ and we take $0<\rho<1$ and $0<\gamma<\frac12\eps$ arbitrary. Let $k_0 = \left\lceil\gamma N\right\rceil$ and $k_1=\left\lfloor (1-\gamma)N\right\rfloor$ and apply Proposition \ref{prop:uniform_bound} using the lower bound of Lemma \ref{lem:lowerbounddense}:
\begin{align*}
\mathbb{E}[T_N] &\geq \frac{1}{k_1}\prod_{k=k_0+1}^{k_1-1}\frac{\tau M_{N,k}}{k} = \frac{1}{k_1} (\tau\rho Np)^{k_1-k_0-1}\prod_{k=k_0+1}^{k_1-1}\left(1-\frac k N\right)\\
&\geq \frac{1}{N}(\tau\gamma \rho Np)^{(1-2\gamma)N-3} \geq e^{(1-2\gamma)N\log(Np\tau)+\mathcal{O}(N)}\geq e^{(1-\varepsilon)N\log(Np\tau)},
\end{align*}
assuming $N$ is large enough. This proves the first statement.


For the second statement, suppose $Np\tau = \lambda$ and choose $1>\rho>\lambda^{-\varepsilon/3}$. Let $\gamma=1-\frac 1\lambda$ and take $k_0 = \lfloor \varepsilon\gamma N/3\rfloor-2$ and $k_1 = \lfloor \gamma N\rfloor$. Invoking Proposition \ref{prop:uniform_bound} again, we find
\begin{align*}
\mathbb{E}[T_N] &\geq \frac{1}{k_1}\prod_{k=k_0+1}^{k_1-1}\frac{\tau M_{N,k}}{k} = \frac{1}{k_1} (\rho\lambda)^{k_1-k_0-1}\prod_{k=k_0+1}^{k_1-1}\left(1-\frac k N\right).\\
\end{align*} 
Note that $k_1-k_0-1 \geq (1-\varepsilon/3)\gamma \cdot N$, so that 
\begin{align*}
\log\left((\rho\lambda)^{k_1-k_0-1}\right) & \geq \left(1-\frac \varepsilon 3\right)^2\gamma \log(\lambda)\cdot N \geq \left(1-\frac{2\varepsilon}{3}\right)\left(1-\frac 1 \lambda\right)\log(\lambda)\cdot N. 
\end{align*}
Furthermore, we have
\begin{align*}
\log\left(\prod_{k=k_0+1}^{k_1-1} \left(1-\frac k N\right)\right) &\geq \sum_{k = 0}^{k_1-1}\log\left(1-\frac k N\right)\\
& \geq N\cdot\int_{1-\gamma}^{1}\log(s)ds =  \bigl(-(1-\gamma)\log(1-\gamma)-\gamma\bigr)\cdot N\\
& = \left(\frac 1\lambda\log(\lambda)+\frac 1\lambda-1\right)\cdot N.
\end{align*}
Also, for $N$ large enough, we have 
\[
\log\left(\frac 1 k_1\right) \geq -\log(N) \geq -\frac\varepsilon 3\log\lambda \cdot N.
\]
Combining these bounds, we obtain for $N$ large enough 
\[
\mathbb{E}[T_N] \geq e^{\left((1-\varepsilon)\log(\lambda)+\frac 1\lambda-1\right)\cdot N}.
\]
\end{proof}
Since $\log(\lambda)+\frac 1\lambda-1>0$ for $\lambda>1$, the previous theorem proves that the expected extinction time of the contact process with infection rate $\tau = \lambda/Np$ grows exponentially in $N$ if $\lambda>1$.

We know that if $1/\tau$ is greater than the largest eigenvalue of the adjacency matrix $A$, the extinction time only grows logarithmically in $N$. It is not hard to see that the largest eigenvalue of $A$ is somewhere close to $Np$ (since $\sum_{i,j} \E(A_{ij}) = (N^2-N)p$), so we cannot expect that if we choose $\lambda < 1$, we would get exponential extinction time. In this sense, our method gives the optimal bound for the existence for metastability. Further research is needed to see how the contact process behaves if $1/\tau \approx Np$.

\subsection{The sparse case: $Np$ constant}\label{sec:sparse}

In this section we consider the case where $Np=\sigma>0$. It is well known that the graph consists of small (logarithmic in $N$) components in the subcritical regime where $Np<1$. For our argument to work, we need at least that all sets of size $N/2$ have links to their complements. If sets of this size can be found outside the giant component, our method will fail. So we can only hope for success if at least half of the nodes are in the giant component. This already puts a restriction on $\sigma$, since if $\sigma<2\log(2)$, the giant component will be too small. In fact, for technical reasons that will become clear soon, we have to choose $\sigma > 4\log(2)$.

The next lemma is the same in spirit as Lemma \ref{lem:lowerbounddense}, but there are some additional subtleties. First of all, we control the number of edges only for sets of size $\gamma N$, where $\gamma$ lies in a symmetric interval $(\gamma_\sigma,1-\gamma_\sigma)$ around $\frac12$. This interval becomes wider if $\sigma$ increases. For a set $S$ of size $\gamma N$, we derive a lower bound for the number of links of the form $\rho\cdot \mathbb{E}[L_S]$, but now $\rho$ will be a function of $\gamma$. This function $\rho(\gamma)$ approaches zero for $\gamma$ close to the boundaries $\gamma_\sigma$ and $1-\gamma_\sigma$. Unfortunately, explicit and optimal expressions for $\gamma_\sigma$ and $\rho(\gamma)$ seem to be out of reach. We choose them in such a way that we can provide explicit lower bounds for the expected extinction time and such that the results are asymptotically optimal for large $\sigma$. In particular this means that $\rho(\gamma)$ goes to 1 pointwise on $(0,1)$ if $\sigma$ increases. 

\begin{lemma}\label{lem:lowerboundsparse} Fix $\sigma>4\log(2)$ and consider the Erd\H{o}s-R\'enyi random graph sequence $G_N$ with edge probability $p = \frac \sigma N$. Choose
	\[
	\gamma_{\sigma} = \frac12-\sqrt{\frac14-\frac{\log(2)}{\sigma}}\in (0,\frac12)\quad\text{and}\quad\alpha_{\sigma} = \frac{2\log\left(1-2\sqrt{\frac{\log(2)}{\sigma}}\right)}{\log\left(\frac14-\frac{\log(2)}{\sigma}\right)}\in (0,2). 
	\]
	Furthermore, for $\gamma_\sigma < \gamma < 1-\gamma_\sigma$, let
	\[
	\rho(\gamma) = \left(\gamma(1-\gamma)-\frac{\log(2)}{\sigma}\right)^{\alpha_\sigma}\in (0,1)\quad\text{and}\quad M_{N,k} = \rho(k/N)\sigma k(N-k).
	\]
Then, with high probability,
\[
L_S\geq M_{N,k}
\]
for all $k$ satisfying $\gamma_\sigma N< k< (1-\gamma_\sigma)N$ and all $S\in\mathcal{S}_k$.
\end{lemma}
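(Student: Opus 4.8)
The plan is to follow the template of Lemma~\ref{lem:lowerbounddense} — fix $k$, bound the lower tail of $L_S$ for a single $S\in\mathcal S_k$ by a Chernoff estimate, union bound over $\mathcal S_k$, then sum over $k$ — but with two refinements forced by the sparse regime. Writing $\gamma=k/N$, $u=\gamma(1-\gamma)$ and $c=\log(2)/\sigma$, we have $L_S\sim\mathrm{Bin}(k(N-k),\sigma/N)$, so the mean $\mu:=\mathbb{E}[L_S]=\sigma u N$ now grows only \emph{linearly} in $N$, whereas in the dense case it was superlinear. I therefore would not use the quadratic Chernoff bound but the sharp multiplicative one, $\mathbb{P}(L_S\le\rho\mu)\le e^{-\mu\phi(\rho)}$ with $\phi(\rho)=1-\rho+\rho\log\rho$, and I would replace the crude count $\binom Nk\le 2^N$ by the entropy bound $\binom Nk\le e^{NH(\gamma)}$, where $H(\gamma)=-\gamma\log\gamma-(1-\gamma)\log(1-\gamma)$. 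Since the threshold $M_{N,k}$ is $\rho(\gamma)$ times the mean $\mu$, combining these gives
\[
\mathbb{P}\bigl(\exists\,S\in\mathcal S_k:\ L_S\le M_{N,k}\bigr)\ \le\ \binom Nk\,e^{-\mu\phi(\rho(\gamma))}\ \le\ \exp\!\Bigl(-N\bigl[\sigma u\,\phi(\rho(\gamma))-H(\gamma)\bigr]\Bigr),
\]
so everything reduces to the deterministic inequality $\sigma u\,\phi(\rho(\gamma))>H(\gamma)$, uniformly over $\gamma\in[\gamma_\sigma,1-\gamma_\sigma]$.

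To obtain a uniform gap I would first prove the single inequality $J(\gamma):=\sigma\gamma(1-\gamma)\phi(\rho(\gamma))\ge\log 2$ on the closed interval, tight only at the endpoints $\gamma\in\{\gamma_\sigma,1-\gamma_\sigma\}$; this is the natural target because $\sigma\gamma_\sigma(1-\gamma_\sigma)=\log 2$ and $\phi(0)=1$. Combining it with $H(\gamma)\le\log 2$, which is tight only at $\gamma=\tfrac12$, and writing $\sigma u\,\phi(\rho)-H(\gamma)=[J(\gamma)-\log 2]+[\log 2-H(\gamma)]$, both brackets are nonnegative and their tightness sets $\{\gamma_\sigma,1-\gamma_\sigma\}$ and $\{\tfrac12\}$ are disjoint, so the sum is strictly positive at every point of the compact interval and hence bounded below by some $\delta>0$. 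Each probability above is then at most $e^{-\delta N}$, and summing over the at most $N$ admissible values of $k$ gives $Ne^{-\delta N}\to 0$, which is the claimed high-probability statement.

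The crux, and the only place the precise value of $\alpha_\sigma$ enters, is the inequality $J(\gamma)\ge\log 2$. Substituting $v=u-c$ and $\rho(\gamma)=v^{\alpha_\sigma}$, it reads $(c+v)\,\phi(v^{\alpha_\sigma})\ge c$ for $v\in[0,\tfrac14-c]$. At $v=0$ equality holds because $\phi(0)=1$; at the midpoint $v=\tfrac14-c$ the choice of $\alpha_\sigma$ produces the clean value $\rho(\tfrac12)=(1-2\sqrt c)^2$, and the elementary bound $r\log r>r-1$ on $(0,1)$, applied with $r=1-2\sqrt c$, yields $\phi(\rho(\tfrac12))>4c$, i.e. strict inequality there. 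I expect the main technical obstacle to be the range in between: this is a one–variable inequality that I would settle by analysing $v\mapsto(c+v)\phi(v^{\alpha_\sigma})$ directly, differentiating with $\phi'(\rho)=\log\rho$ and using that $\alpha_\sigma$ is exactly calibrated to balance the vanishing factor $v^{\alpha_\sigma}$ against the linear gain in $v$ as $v\to0$.

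Finally I would record why the hypothesis $\sigma>4\log 2$ is the right one: it is equivalent to $\gamma_\sigma<\tfrac12$, hence to $H(\gamma_\sigma)<\log 2$, which is precisely what makes the boundary bracket $\log 2-H(\gamma_\sigma)$ strictly positive. Here it is essential that the sharp Chernoff bound contributes $\phi(0)=1$ rather than the $\tfrac12$ of the quadratic bound, since only with this factor does $\sigma\gamma_\sigma(1-\gamma_\sigma)\phi(0)=\log 2$ dominate $H(\gamma_\sigma)$ for every $\sigma>4\log 2$; the weaker bound would force a strictly larger density threshold.
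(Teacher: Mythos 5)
Your probabilistic skeleton is exactly the paper's: $L_S\sim{\rm Bin}(k(N-k),\sigma/N)$, the Kullback--Leibler/multiplicative Chernoff bound $\P(L_S\le\rho\mu)\le e^{-\mu\phi(\rho)}$ (your $\phi$ is the paper's $G$), the entropy bound $\binom{N}{k}\le e^{NH(\gamma)}$, a union bound over $S$ and $k$, and the reduction to the deterministic inequality $\sigma\gamma(1-\gamma)\phi(\rho(\gamma))>H(\gamma)$, uniformly on the interval. The gap is in your treatment of that deterministic inequality: the intermediate claim you call the crux, namely $J(\gamma):=\sigma\gamma(1-\gamma)\phi(\rho(\gamma))\ge\log 2$ on all of $[\gamma_\sigma,1-\gamma_\sigma]$, is \emph{false} for large $\sigma$. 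Writing $c=\log(2)/\sigma$ and $v=\gamma(1-\gamma)-c$, one has
\[
J(\gamma)-\log 2=\sigma\Bigl[v-(c+v)\,v^{\alpha_\sigma}\bigl(1-\alpha_\sigma\log v\bigr)\Bigr],
\]
and when $\alpha_\sigma<1$ the subtracted term is of order $c\,\alpha_\sigma v^{\alpha_\sigma}\lvert\log v\rvert\gg v$ as $v\downarrow 0$, so $J(\gamma)<\log 2$ just inside the endpoints: $J$ equals $\log 2$ at $\gamma_\sigma$ but initially \emph{decreases} as $\gamma$ moves inward. Moreover $\alpha_\sigma<1$ holds precisely when $(1-2\sqrt{c})^2>\tfrac14-c$, i.e.\ $\sqrt{c}<3/10$, i.e.\ for every $\sigma>\tfrac{100}{9}\log 2\approx 7.70$ --- in particular throughout the asymptotic regime $\sigma\to\infty$ for which the lemma is designed. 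Concretely, for $\sigma=100$ one gets $\alpha_\sigma\approx 0.258$, and at $v=10^{-6}$ (so $\gamma\approx 0.007$) one computes $\rho(\gamma)\approx 0.0285$, $\phi(\rho)\approx 0.870$, hence $J\approx 0.603<\log 2\approx 0.693$. So the first bracket in your decomposition $[J-\log 2]+[\log 2-H]$ is negative there, and the compactness argument built on it collapses; your verifications at the endpoints and at $\gamma=\tfrac12$ are correct but happen to be the only easy points. (Even in the range $4\log 2<\sigma\lesssim 7.7$ where the claim may hold, you leave the intermediate $\gamma$'s as a sketch.)

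The lemma survives because near the endpoints the correct benchmark is $H(\gamma)$ itself, which is far below $\log 2$ there (in the example above $H(\gamma)\approx 0.042$, so $J>H$ holds with a huge margin). This is exactly how the paper argues: it calibrates $\rho(\gamma)$ \emph{pointwise against $H(\gamma)$}, not against its maximum. Specifically, the paper uses $\phi(\rho)>(1-\sqrt{\rho})^2$ --- which is precisely your $r\log r>r-1$ trick, applied at every $\gamma$ rather than only at $\gamma=\tfrac12$ --- together with the fact that $\alpha_\sigma$, which is the value at $\gamma=\tfrac12$ of
\[
\gamma\mapsto \frac{2\log\Bigl(1-\sqrt{H(\gamma)/\bigl(\sigma\gamma(1-\gamma)\bigr)}\Bigr)}{\log\bigl(\gamma(1-\gamma)-\log(2)/\sigma\bigr)},
\]
dominates this ratio on the whole interval. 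Since $0<\gamma(1-\gamma)-c<1$, a larger exponent gives a smaller power, whence $\sqrt{\rho(\gamma)}\le 1-\sqrt{H(\gamma)/(\sigma\gamma(1-\gamma))}$ and therefore $\sigma\gamma(1-\gamma)\bigl(1-\sqrt{\rho(\gamma)}\bigr)^2\ge H(\gamma)$ pointwise; the uniform negative exponent then follows by continuity on the closed interval, whose endpoint values are $H(\gamma_\sigma)-\log 2<0$ --- this is where $\sigma>4\log 2$ enters, as you correctly observed. To repair your write-up you would have to replace the constant benchmark $\log 2$ by a $\gamma$-dependent one that decays to $H(\gamma_\sigma)$ at the endpoints, which is in effect what the paper's calibration of $\alpha_\sigma$ does.
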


\begin{proof}
Fix $N$ and $k = \gamma N\in \mathbb{N}$ such that $\gamma_\sigma < \gamma < 1-\gamma_\sigma$. As before, for fixed $S\in\mathcal{S}_k$, the number of links $L_S$ to the complement has a $\text{Bin}(k(N-k),p)$ distribution. This time we apply the Chernoff-Hoeffding inequality \cite{Hoeffding} and obtain for any $\rho\in (0,1)$
\begin{align*}
\mathbb{P}(L_S\leq \rho\cdot\mathbb{E}[L_S]) &\leq e^{-k(N-k)\cdot D(\rho p || p)}\\
&= e^{-\left(\rho p\log(\rho) + (1-\rho p)\log\left(\frac{1-\rho p}{1-p}\right)\right)\gamma(1-\gamma)N^2}, 
\end{align*}
in which $D(\rho p || p)$ is the Kullback-Leibler divergence. Filling in that $p=\sigma/N$, we can find some constant $C>1$ such that for $N$ large enough,
\[ \P (L_S \leq \rho \cdot\mathbb{E}[L_I]) \leq C e^{-\sigma G(\rho)\gamma(1-\gamma) N},\]
where $G(\rho) = \rho\log(\rho)+1-\rho$. We also know that
\begin{equation}\label{eq:bin_bound}
{N\choose \gamma N} \leq e^{NH(\gamma)}, 
\end{equation}
with the entropy function $H(\gamma)$ defined by
\[ H(\gamma) = -\gamma\log(\gamma) - (1-\gamma)\log(1-\gamma).\]
Therefore,
\begin{align}\label{eq:exponent}
\mathbb{P}(\exists S\in S_{N,k}:L_S\leq \rho\cdot\mathbb{E}[L_S]) &\leq {N\choose \gamma N} \P (L_I \leq \rho \cdot\mathbb{E}[L_I])\nonumber\\
& \leq Ce^{N(H(\gamma)-\sigma G(\rho)\gamma(1-\gamma))}
\end{align}
The exponent is negative if $\sigma G(\rho) \gamma(1-\gamma) > H(\gamma)$. These functions both have their maximum at $\gamma=\frac12$, see plot below. The function $G(\rho)$ is decreasing with maximum $G(0)=1$, so the exponent can only be negative if $\sigma > 4\log(2)$ and if $\gamma_L<\gamma<\gamma_R$, see Figure \ref{fig:plotH}. 

\begin{figure}
\begin{center}
	\includegraphics[scale=0.5]{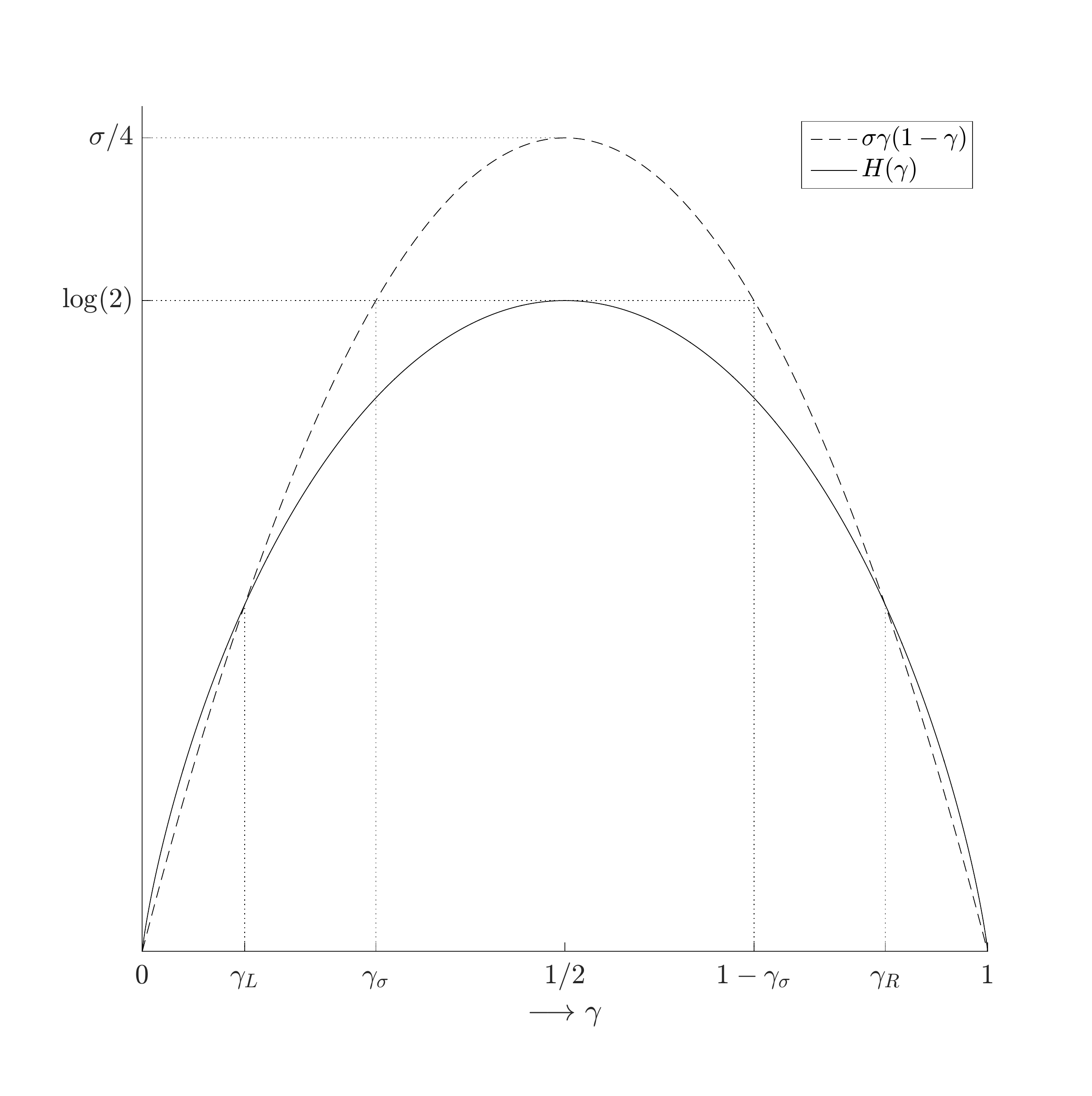}\caption{If $\sigma\gamma(1-\gamma)>H(\gamma)$, we can choose $\rho(\gamma)$ such that the exponent in (\ref{eq:exponent}) is negative.}\label{fig:plotH}
\end{center}
\end{figure}
To prove that the given choice for $\rho$ indeed guarantees the exponent to be negative, we will use that for $0<\rho<1$
\[
G(\rho) > (1-\sqrt{\rho})^2,
\]
where the right hand side is decreasing in $\rho$ as well. Moreover, note that 
\[
\alpha_\sigma = \frac{2\log\left(1-\sqrt{\frac{\log(2)}{\sigma/4}}\right)}{\log\left(\frac14-\frac{\log(2)}{\sigma}\right)} \geq \frac{2\log\left(1-\sqrt{\frac{H(\gamma)}{\sigma\gamma(1-\gamma)}}\right)}{\log\left(\gamma(1-\gamma)-\frac{\log(2)}{\sigma}\right)}
\]
for $\gamma_\sigma <\gamma<1-\gamma_\sigma$. This means the following inequalities hold
\begin{align*}
\sigma G(\rho)\gamma(1-\gamma) &> \sigma \left(1-\sqrt{\left(\gamma(1-\gamma)-\frac{\log(2)}{\sigma}\right)^{\alpha_{\sigma}}}\right)^2\gamma(1-\gamma)\\
& = \sigma\left(1-\left(1-\sqrt{\frac{H(\gamma)}{\sigma\gamma(1-\gamma)}}\right)\right)^2\gamma(1-\gamma) = H(\gamma).
\end{align*} 
It follows that 
\[
m:=\sup_{\gamma_\sigma < \gamma < (1-\gamma_\sigma)} H(\gamma) - \sigma G(\rho)\gamma(1-\gamma) <0,
\]
which implies that
\[
\lim_{N\rightarrow\infty}\mathbb{P}(\exists k\in (\gamma_\sigma N,(1-\gamma_\sigma)N), \exists S\in \mathcal{S}_{N,k}:L_S\leq M_{N,k}) \leq \lim_{N\to\infty} CNe^{Nm} = 0.
\]
\end{proof}

Having a uniform lower bound on the number of links, we proceed to bound the expected extinction time. Choose $\gamma_\sigma$ and $\alpha_\sigma$ as in Lemma \ref{lem:lowerboundsparse}. The following theorem shows that for every $\sigma>4\log(2)$ and infection rate large enough, the expected extinction time is exponential in $N$. Moreover, an explicit lower bound for the growth rate is given.

The idea of the proof essentially is that we bound the extinction time as in Proposition \ref{prop:uniform_bound}, so that our lower bound is a product of the form $\prod_\gamma \tau\rho\sigma(1-\gamma)$, where we can still choose over which interval the product is taken. We will choose an interval $(\gamma_0,\gamma_1)$ that is contained in $(\gamma_\sigma,1-\gamma_\sigma)$ and in which all terms in the product are greater than 1. To simplify notation, we let $\zeta = \frac14-\frac{\log(2)}{\sigma} = \rho(\frac12)^{\alpha_\sigma^{-1}}$. 
%
\begin{theorem}\label{thm:sparse}
		Consider the Erd\H{o}s-R\'enyi random graph sequence $G_N:=G_{N,p}= (V_N,E_N)$ with edge probability $p = \sigma/N$ for some constant $\sigma>4\log(2)$. Let $(I^N_t)_{t\geq 0}$ be the contact process on $G_N$ with $I_0^N=V_N$. There exist functions
		\[
		\tau_0(\sigma) = \frac{1+o(1)}{\sigma}\qquad\text{and}\qquad \varepsilon(\sigma)=o(1),\qquad\text{(asymptotics for $\sigma\rightarrow\infty$)}
		\]
		such that for each $\tau>\tau_0(\sigma)$ there exists $\eta>0$ for which $\mathbb{E}[T_N]>e^{\eta N}$ w.h.p. Moreover, if $\tau>\tau_0(\sigma)$,		
			\[
			\frac{1}{N}\log(\mathbb{E}[T_N]) \geq (1-\varepsilon(\sigma))\log(\tau\sigma)+\frac{1-\varepsilon(\sigma)}{\tau\sigma}-1,\qquad w.h.p.
			\]
(Explicit expressions for this lower bound and for $\tau_0(\sigma)$ are given in the proof.)
\end{theorem}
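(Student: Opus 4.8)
The plan is to run the machinery of Proposition \ref{prop:uniform_bound} with the high-probability uniform lower bounds $M_{N,k}$ from Lemma \ref{lem:lowerboundsparse}, which hold simultaneously for all admissible $k$. Substituting $p=\sigma/N$, the $k$-th factor becomes $\tau M_{N,k}/k=\tau\sigma\,\rho(k/N)(1-k/N)$, so that for any $k_0,k_1$ with $\gamma_\sigma N<k_0<k_1<(1-\gamma_\sigma)N$,
\[
\mathbb{E}[T_N]\geq\frac{1}{k_1}\prod_{k=k_0+1}^{k_1-1}\tau\sigma\,\rho(k/N)\,(1-k/N).
\]
Taking logarithms and writing $\gamma=k/N$, the task reduces to lower-bounding the Riemann sum $\sum_k\log\!\left(\tau\sigma\,\rho(\gamma)(1-\gamma)\right)$ by $N$ times the corresponding integral, giving
\[
\frac1N\log\mathbb{E}[T_N]\ \gtrsim\ \int_{\gamma_0}^{\gamma_1}\log\!\left(\tau\sigma\,\rho(\gamma)(1-\gamma)\right)d\gamma,
\]
up to an $O(N^{-1}\log N)$ loss from the prefactor $1/k_1$ and from the sum-to-integral comparison. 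Since $\log\rho$ and $\log(1-\gamma)$ are monotone on each side of $\gamma=\tfrac12$, this comparison in the lower-bounding direction is routine.

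Next I would choose the interval $(\gamma_0,\gamma_1)$, pushed inside $(\gamma_\sigma,1-\gamma_\sigma)$, following the dense case (Theorem \ref{theorem:ERdense}, part 2): the natural upper endpoint is near $\gamma_1=1-1/(\tau\sigma)$, where $\tau\sigma(1-\gamma)$ crosses $1$, and $\gamma_0$ is taken small. The key algebraic mechanism is that two ``clean'' pieces combine. Splitting the integrand as $\log(\tau\sigma)+\log(1-\gamma)+\log\rho(\gamma)$, the count term $(\gamma_1-\gamma_0)\log(\tau\sigma)$ together with $\int_{\gamma_0}^{\gamma_1}\log(1-\gamma)\,d\gamma$, whose antiderivative is $(1-\gamma)-(1-\gamma)\log(1-\gamma)$, reproduces, at $\gamma_0\approx0$ and $\gamma_1=1-1/(\tau\sigma)$, exactly $\log(\tau\sigma)+\frac1{\tau\sigma}-1$, since the boundary value at $1-\gamma_1=1/(\tau\sigma)$ supplies the missing $\frac1{\tau\sigma}\log(\tau\sigma)$. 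This is the sparse analogue of the identity used in the dense proof, and it produces the target shape before corrections.

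The corrections are then collected into $\varepsilon(\sigma)$. The term $\int_{\gamma_0}^{\gamma_1}\log\rho(\gamma)\,d\gamma=\alpha_\sigma\int\log\!\left(\gamma(1-\gamma)-\tfrac{\log2}{\sigma}\right)d\gamma$ is negative but uniformly $O(\alpha_\sigma)$ in absolute value, and $\alpha_\sigma\to0$ as $\sigma\to\infty$; the deviation of the true positive-region endpoint from $1-1/(\tau\sigma)$, forced because $\rho(\gamma_1)<1$ and because $\gamma_1$ must stay below $1-\gamma_\sigma$, likewise contributes a $\sigma$-controlled loss. To express everything with a single $\tau$-independent $\varepsilon(\sigma)$, I would use the elementary bound $\log(\tau\sigma)+\frac1{\tau\sigma}\geq1$, valid for all $\tau\sigma\geq1$, which lets one absorb any additive error of size $c(\sigma)$ into $-\varepsilon(\sigma)\big(\log(\tau\sigma)+\frac1{\tau\sigma}\big)$ with $\varepsilon(\sigma)=O(c(\sigma))$, yielding $(1-\varepsilon(\sigma))\log(\tau\sigma)+\frac{1-\varepsilon(\sigma)}{\tau\sigma}-1$. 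For the first (qualitative) claim, $\tau_0(\sigma)$ is read off as the threshold at which the integrand is positive on a nonempty subinterval of $(\gamma_\sigma,1-\gamma_\sigma)$, i.e. $\tau_0(\sigma)=\big(\sigma\max_\gamma\rho(\gamma)(1-\gamma)\big)^{-1}$; since $\rho\to1$ pointwise and $\gamma_\sigma\to0$, this maximum tends to $1$ and $\tau_0(\sigma)=(1+o(1))/\sigma$.

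The main obstacle I expect is the interplay between the $\gamma$-dependent factor $\rho(\gamma)$ and the restricted admissible window. Unlike the dense case, $\rho$ vanishes at $\gamma_\sigma$ and $1-\gamma_\sigma$, so the positive region where $\tau\sigma\rho(\gamma)(1-\gamma)>1$ does not extend to the ideal endpoint $1-1/(\tau\sigma)$, and its true location depends on both $\tau$ and $\sigma$. The delicate point is to show that the resulting endpoint shift, together with $\int\log\rho$, can be bounded uniformly in $\tau$ by a quantity that vanishes as $\sigma\to\infty$, so that a single $\varepsilon(\sigma)=o(1)$ works across the whole range $\tau>\tau_0(\sigma)$; the estimate $\log x+1/x\geq1$ is what makes this packaging possible.
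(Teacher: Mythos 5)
Your proposal follows essentially the same route as the paper's proof: apply Proposition \ref{prop:uniform_bound} with the high-probability bounds of Lemma \ref{lem:lowerboundsparse}, pass from the product to integrals, extract the leading term $\log(\tau\sigma)+\frac{1}{\tau\sigma}-1$ from the count term together with $\int\log(1-\gamma)\,d\gamma$, and absorb the $O(\alpha_\sigma)$ contribution of $\int\log\rho$ and the endpoint shifts into a single $\varepsilon(\sigma)=o(1)$. The only substantive difference is bookkeeping: the paper replaces $\rho(\gamma)$ by an explicit constant lower bound $c=\zeta^{\sqrt{2\alpha_\sigma}}$ on a subinterval, which yields the advertised closed-form $\tau_0(\sigma)=\bigl(\sigma c(1-\gamma_0)\bigr)^{-1}$, whereas your threshold $\tau_0=\bigl(\sigma\max_\gamma\rho(\gamma)(1-\gamma)\bigr)^{-1}$ is implicit (though slightly sharper), and your packaging of additive errors via $\log x+1/x\geq 1$ makes explicit a step the paper leaves informal.
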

\begin{proof}
By Proposition \ref{prop:uniform_bound} and Lemma \ref{lem:lowerboundsparse}, for all $k_0$ and $k_1$ such that $\gamma_\sigma N<k_0<k_1<(1-\gamma_\sigma)N$ we have with high probability
\begin{align}\label{eq:products}
\mathbb{E}[T_N] &\geq \frac{1}{k_1}\prod_{k=k_0+1}^{k_1-1} \frac{\tau M_{N,k}}{k} =\frac{1}{k_1}(\tau\sigma)^{k_1-k_0-1} \prod_{k=k_0+1}^{k_1-1} \rho\left(\frac k N\right)\left(1-\frac{k}{N}\right).
\end{align}
To obtain a lower bound, we will choose $k_0$ and $k_1$ such that $\tau\sigma\rho(\gamma)(1-\gamma) > 1$ for all $\gamma\in(k_0/N,k_1/N)$. First we choose a constant $0<c< \max \rho(\gamma) =\rho(1/2)=\zeta^{\alpha_\sigma}$ depending on $\sigma$ and determine when $\rho(\gamma)>c$.  The equation $\rho(\gamma)=c$ has two real solutions
\[
\gamma_0 := \frac12 -\sqrt{\zeta-c^{\alpha_\sigma^{-1}}}\qquad\text{and}\qquad 1-\gamma_0.
\]
In order to obtain asymptotically optimal results, we want $c\to 1$ and $\gamma_0\to 0$ for $\sigma\to\infty$. Note that $0<\alpha_\sigma<2$ for all $\sigma>4\log(2)$. We therefore choose
\[
c = \rho(1/2)^{(\alpha_\sigma/2)^{-1/2}} = \zeta^{\sqrt{2\alpha_\sigma}}, \text{\ so that\ }\gamma_0 = \frac12 -\sqrt{\zeta-\zeta^{\sqrt{2/\alpha_\sigma}}}. 
\]
Then we use that for $\gamma\in (\gamma_0,1-\gamma_0)\cap [0,1-1/(\tau\sigma c)]$ we have
\[
\tau\sigma\rho(\gamma)(1-\gamma) > \tau\sigma c(1-\gamma)\geq 1.
\]
To have non-empty intersection, we need $\gamma_0<1-1/(\tau\sigma c)$, giving the following condition on $\tau$
\begin{equation}\label{eq:taubound}
\tau >\tau_0(\sigma) := \frac{1}{\sigma c (1-\gamma_0)} = \left(\sigma\zeta^{\sqrt{2\alpha_\sigma}}\left(\frac12 +\sqrt{\zeta-\zeta^{\sqrt{2/\alpha_\sigma}}}\right)\right)^{-1}.
\end{equation}

Now we fix $\tau$ satisfying this condition. Let $\gamma_1 = \min(1-\gamma_0,1-1/(\tau\sigma c))$ and take $k_0 = \left\lceil\gamma_0 N\right\rceil$ and $k_1 = \left\lfloor\gamma_1 N\right\rfloor$. Note that all terms in the product (\ref{eq:products}) are at least $1$ and that half of them are bounded from below by $\tau\sigma c(1-\frac{\gamma_0+\gamma_1}{2}))>1$, proving exponential growth of the extinction time.

To further bound the products in (\ref{eq:products}), we take logarithms, divide by $N$ and use that $\rho(\gamma) = ((\gamma-\gamma_\sigma)(1-\gamma-\gamma_\sigma))^{\alpha_\sigma}$:
\begin{align*}
\frac1N\log\left(\prod_{k=k_0+1}^{k_1-1} \rho\left(\frac k N\right)\right)& = \frac1N\sum_{k = k_0+1}^{k_1-1} \log\left(\rho\left(\frac k N\right)\right) \geq \int_{\gamma_0}^{\gamma_1} \log(\rho(\gamma)) d\gamma\\
&= \alpha_\sigma\int_{\gamma_0}^{\gamma_1} \log(\gamma-\gamma_\sigma)+\log(1-\gamma-\gamma_\sigma)d\gamma\\
&=\alpha_\sigma\int_{\gamma_0-\gamma_\sigma}^{\gamma_1-\gamma_\sigma}\log(s)ds +\alpha_\sigma\int_{1-\gamma_1-\gamma_\sigma}^{1-\gamma_0-\gamma_\sigma}\log(s)ds\\
&\geq -2\alpha_\sigma = -o(1)\qquad (\sigma\rightarrow\infty).
\end{align*}
Furthermore, we have
\begin{align*}
\frac1N\log\left(\prod_{k=k_0+1}^{k_1-1} \left(1-\frac k N\right)\right) &= \frac1N\sum_{k = k_0+1}^{k_1-1}\log\left(1-\frac k N\right) \geq \int_{1-\gamma_1}^1\log(s)ds\\ &= \left\{\begin{array}{ll}-\gamma_1-o(1)&\text{if}\ \gamma_1=1-\gamma_0,\\(1-\gamma_1)\log(\tau\sigma)-1-o(1)+\frac{1+o(1)}{\tau\sigma}&\text{if}\ \gamma_1 = 1-\frac{1}{\tau\sigma c}.\end{array}\right.
\end{align*}
Finally, note that for $\epsilon$ arbitrary and $N$ large enough
\begin{align*}
\frac1N\log\left(\frac{1}{k_1}(\tau\sigma)^{k_1-k_0-1}\right) &\geq \left(\gamma_1-\gamma_0-\frac 3 N\right) \log(\tau\sigma)-\frac{\log(N)}{N}\\
&\geq (\gamma_1-\gamma_0)\log(\tau\sigma)-\frac{\epsilon}{\sigma}.
\end{align*}
Putting things together, in all cases we find
\[
\frac{1}{N}\log(\mathbb{E}[T_N])\geq (1-o(1))\log(\tau\sigma)-1+\frac{1+o(1)}{\tau\sigma}-o(1)\qquad (\sigma\rightarrow\infty),
\]
which implies the statement of the theorem.
\end{proof}

\begin{figure}
	\begin{center}
		\includegraphics[scale=0.5,angle = 270]{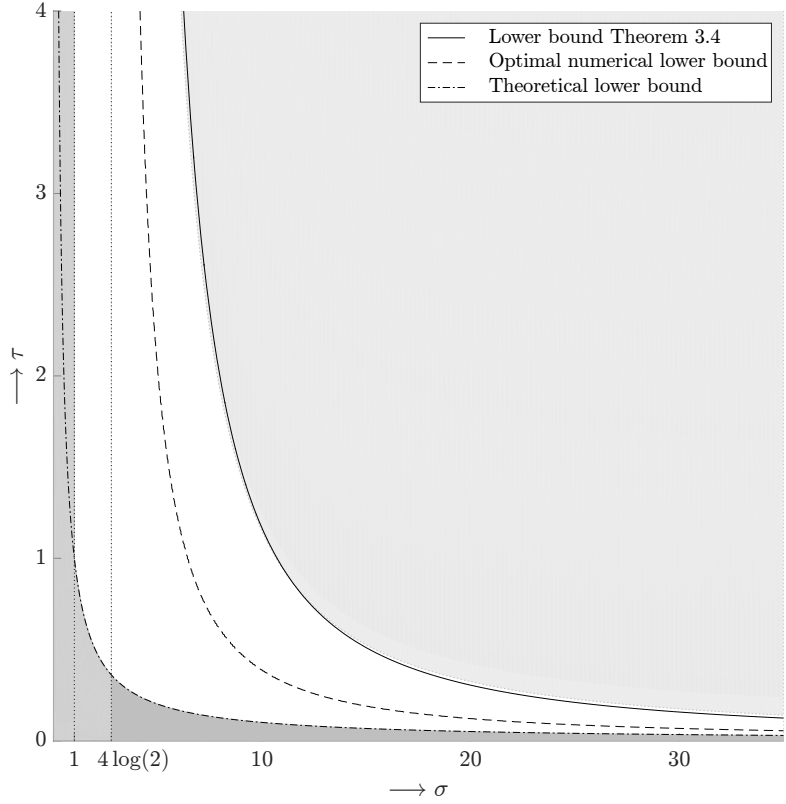}\caption{Comparison of different lower bounds for the infection rate as functions of $\sigma$. Expected extinction times are subexponential in $N$ in the darker shaded region. In the lighter shaded region, Theorem \ref{thm:sparse} gives explicit exponential lower bounds.}\label{fig:minimal_tau}
	\end{center}
\end{figure}

Thus, for $\sigma> 4\log(2)$, we found $\tau_0$ such the extinction time grows exponentially in $N$ whenever $\tau>\tau_0$. This lower bound is not optimal, see Figure \ref{fig:minimal_tau} for a plot of $\tau_0$ as function of $\sigma$, compared with a numerical approximation of the optimum that could be achieved by our current method of bounding links.  We have $\zeta\to \frac14$ and $\alpha_\sigma\to 0$ for large $\sigma$, so that $\tau_0$ approaches $\frac 1 \sigma$ and therefore is asymptotically optimal. Below this threshold, which is plotted as well, the expected extinction times cannot be exponential. Also for $\sigma<1$, extinction times are subexponential, since the random graph only consists of small components. Rigorous results in the white area require further research, it would especially be interesting to know what happens in the vertical strip $\sigma\in [1,4\log(2)]$. In \cite{BNNS19}, it is shown that for $\sigma$ in this range, there exists an infection rate making the expected extinction time exponential, but our methods are not powerful enough to quantify this. 

For large $\sigma$, both the exponential growth speed of $T_N$ and the critical infection rate coincide with the results for the dense case. We conclude this section with the observation that also the contact process on the complete graph gives results that are consistent with our findings for the Erd\H{o}s-R\'enyi graph. 

The contact process on the complete graph $K_N$ is a pure birth-death process on the state space $\left\{0,\ldots,N\right\}$. Taking infection rate $\tau = \lambda/N$ for some $\lambda>1$, the birth rate in state $k$ is $\lambda_k = \lambda k (N-k)$, while the death rate is $k$. By Lemma \ref{lem:hitting}, the expectation of the extinction time $T_N$ can be bounded. Taking $k_0=0$ and $k_1 = N$, we find the upper bound
\begin{equation}\label{eq:completegraph}
\mathbb{E}[T_N]\leq \frac{N^2}{2} \prod_{k=0}^{\lfloor(1-\frac 1 \lambda)N\rfloor} \frac{\lambda (N-k)}{N}.
\end{equation}
For the lower bound, we use Proposition \ref{prop:uniform_bound} with $k_0=1$, $k_1 = \lceil(1-\frac 1 \lambda)N\rceil$ and $M_k = k(N-k)$. This gives 
\[
\mathbb{E}[T_N]\geq \frac{1}{\lceil(1-\frac 1 \lambda)N\rceil} \prod_{k=2}^{\lceil(1-\frac 1 \lambda)N\rceil-1} \frac{\lambda (N-k)}{N}.
\]
Similar calculations as before show that for $N\rightarrow\infty$ and $\lambda>1$ the expected extinction time grows exponentially and
\[
\lim_{N\rightarrow\infty}\frac{1}{N}\log(\mathbb{E}[T_N]) = \log(\lambda)+\frac 1\lambda-1>0.
\]
This result for the contact process on the complete graph displays exactly the growth speed that we found in our lower bounds on the Erd\H{o}s-R\'enyi graph. To find upper bounds for the Erd\H{o}s-Renyi graph a bigger effort is needed, since our Lemma \ref{lem:hitting} requires to bound the number of outgoing links for sets of all sizes. 

Note that the bound on the infection rate for the complete graph is sharp as well, since for $\lambda\leq 1$ all terms in the product in (\ref{eq:completegraph}) are less than 1 so that only the polynomial factor remains. Therefore, the expected extinction time is exponential in $N$ if and only if $\lambda >1$.

\section{Configuration model}\label{sec:confmod}

In this section we will consider the configuration model. We consider a random variable $D\in \{0,1,2,\ldots\}$ and an i.i.d. sequence of degrees $D_1, \ldots, D_N$ with $D_1\sim D$. Since $D_i=0$ means that this node will never interact with other nodes, we could leave them out and still have $\Theta(N)$ nodes left. However, we will not assume $D\geq 1$ since our methods work without this assumption as well. 

Given the degree sequence, the configuration model creates a random graph by completely randomly assigning the ``half-stubs'' to each other, creating links in that way. We will not care about self-loops, double links or an odd number of total degrees: self-loops and a left-over stub are ignored, and multiple links will simply mean that the infection rate between such nodes is an integer multiple of $\tau$. 

We will derive bounds for the number of links between a set of nodes and its complement. We will consider an arbitrary $S\in \mathcal{S}_{k}$, where $k=\lceil\gamma N\rceil$. In that case, we expect the number of links between $S$ and $S^c$ to be linear in $N$, if we keep a fixed degree distribution. Choose $l=\rho N$. Our goal is to find a combination of $\gamma$ and $\rho$ such that with sufficiently high probability the sets $S$ and $S^c$ have at least $\rho N$ links between them. Define the random variable $L_S$ as the number of outgoing links of set $S$. First we will study the number of stubs in $S$ and $S^c$.

Define the independent random variables
\[
S_1= \sum_{i\in S} D_i \mbox{ and }  S_2 =  \sum_{j\in S^c} D_j.
\]
We need the following trivial consequence of Cram\'er's Theorem. Define
\[
R(x) = \sup_{\lambda\in\mathbb{R}}(\lambda x-\log(\mathbb{E}(e^{\lambda D})))
\]
to be the rate function of the random variable $D$. The function $R(x)$ is non-negative, convex and satisfies $R(\mathbb{E}[D])=0$. 
\begin{lemma}\label{lemma:ldp_stubs}
	For closed intervals $F_1,F_2\subset \R$, we have that
	\[
	\limsup_{N\rightarrow\infty}\frac1N\log(\mathbb{P}(S_1/N\in F_1))\leq -\inf_{x\in F_1} \gamma R(\frac x\gamma)
	\]
	and
	\[
	\limsup_{N\rightarrow\infty}\frac1N\log(\mathbb{P}(S_2/N \in F_2))\leq -\inf_{x\in F_2} (1-\gamma) R(\frac x{1-\gamma}).
	\]
\end{lemma}
\begin{proof}
	Cram\'er's Theorem gives for any closed set $F\subseteq \mathbb{R}$,
	\[
	\limsup_{N\to\infty} \frac{1}{\lceil\gamma N\rceil}\log(\mathbb{P}(\frac{S_1}{\lceil\gamma N\rceil}\in F))\leq -\inf_{x\in F} R(x).
	\]
	For $\eps>0$, define $F_1^{\gamma,\eps} = \frac{1}{\gamma}\cdot F_1 + [-\eps/\gamma,\eps/\gamma]$. Then for $N$ large enough, and since $F_1$ is an interval,
	\[ \frac{N}{\lceil\gamma N\rceil}\cdot F_1 \subset F_1^{\gamma,\eps}.\]
	Therefore,
	\[ \limsup_{N\rightarrow\infty}\frac1N\log(\mathbb{P}(S_1/N\in F_1))\leq 
	-\inf_{y\in F_1^{\gamma,\eps}} \gamma R(y) = -\inf_{x\in F_1^{1,\eps}} \gamma R(\frac{x}{\gamma}).\]
	Since $R$ is a lower semi-continuous function and $F_1$ is an interval, we can take the limit for $\eps \downarrow 0$ to conclude that
	\[
	\limsup_{N\rightarrow\infty}\frac1N\log(\mathbb{P}(S_1/N\in F_1))\leq -\inf_{x\in F_1} \gamma R(\frac x\gamma).
	\]
	The second statement follows completely analogously.
\end{proof}

From the previous lemma we conclude that with high probability the numbers of stubs in $S$ and $S^c$ will not be too small. The next step in our argument is to show that this implies that the number of links $L_S$ between $S$ and $S^c$ is unlikely to be small. If we have two sets of nodes, one set having $n_1$ stubs and the other having $n_2$ stubs, then the probability distribution of the number of links $L$ between these two sets is given in the following elementary combinatorial result, given without proof.
\begin{lemma}\label{lem:comb}
	Suppose we have a vase with $n_1$ red balls and $n_2$ white balls, and we take them out pairwise, completely at random. If $B=n_1+n_2$ is odd, one ball will stay left behind. Denote by $L$ the number of mixed pairs that is drawn, i.e., pairs consisting of a red and a white ball. First suppose that $B$ is even. Then for all $l$ such that $0\leq l\leq \min(n_1,n_2)$ and such that $n_1-l$ (and therefore also $n_2-l$) is even, we have
	\[ \P(L=l) = \frac{2^l {{\frac12B}\choose {\ l,\frac12 (n_1-l), \frac12 (n_2-l)\ }}}{{{B}\choose{n_1}}} = \frac{2^l {{\frac12B}\choose{l}}{{\frac12B-l}\choose{\frac12(n_1-l)}}}{{{B}\choose{n_1}}}.\]
	For all other $l$ we have $\P(L=l)=0$. Now suppose that $B$ is odd. Take $0\leq l\leq \min(n_1,n_2)$. If $n_1-l$ is even (and therefore $n_2-l$ is odd), we have
	\[ \P(L=l) = \frac{n_2}{B}\cdot \frac{2^l {{\frac12(B-1)}\choose {\ l,\frac12 (n_1-l), \frac12 (n_2-l-1)\ }}}{{{B-1}\choose{n_1}}} = \frac{2^l {{\frac12(B-1)}\choose{l}}{{\frac12(B-1)-l}\choose{\frac12(n_1-l)}}}{{{B}\choose{n_1}}}.\]
	If $n_1-l$ is odd, we have
	\[ \P(L=l) = \frac{n_1}{B}\cdot \frac{2^l {{\frac12(B-1)}\choose {\ l,\frac12 (n_1-l-1), \frac12 (n_2-l)\ }}}{{{B-1}\choose{n_1-1}}} = \frac{2^l {{\frac12(B-1)}\choose{l}}{{\frac12(B-1)-l}\choose{\frac12(n_1-l-1)}}}{{{B}\choose{n_1}}}.\]
\end{lemma}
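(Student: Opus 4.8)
The plan is to realize the random pairwise removal as a single combinatorial object: a uniformly chosen (near-)perfect matching on the $B = n_1 + n_2$ labelled balls, and to count directly those matchings producing exactly $l$ mixed pairs. Recall that the number of perfect matchings of an even number $m$ of labelled balls is the double factorial $(m-1)!!$. Everything then reduces to bookkeeping with double factorials together with the elementary identity $\binom{m}{a,b,c} = \binom{m}{a}\binom{m-a}{b}$, which already shows that the two displayed forms of each probability coincide.

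First I would treat the case $B$ even. A matching with exactly $l$ mixed pairs is built by choosing the $l$ red balls and the $l$ white balls that enter mixed pairs, pairing these two $l$-sets together, and independently matching the remaining $n_1 - l$ reds among themselves and the remaining $n_2 - l$ whites among themselves. This forces $n_1 - l$ (hence $n_2 - l$) to be even and gives
\[
\#\{\text{matchings with } l \text{ mixed pairs}\} = \binom{n_1}{l}\binom{n_2}{l}\, l!\,(n_1 - l - 1)!!\,(n_2 - l - 1)!!.
\]
Dividing by the total count $(B-1)!!$ and rewriting each double factorial via $(2r-1)!! = (2r)!/(2^r r!)$ collapses the factorials to the claimed expression $2^l \binom{B/2}{l,(n_1-l)/2,(n_2-l)/2}/\binom{B}{n_1}$; for all other $l$ the count is empty, so the probability vanishes.

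For $B$ odd I would avoid redoing the combinatorics and instead condition on the ball left behind. By symmetry of the uniform near-perfect matching, the leftover ball is uniform over all $B$ balls, so it is white with probability $n_2/B$ and red with probability $n_1/B$; conditioned on its colour, the remaining $B-1$ balls form a uniform perfect matching. If the leftover is white, the remaining balls comprise $n_1$ red and $n_2 - 1$ white, and the even-case formula applies verbatim with parameters $(n_1, n_2 - 1)$. This is exactly the branch where $n_1 - l$ is even (forcing $n_2 - l$ odd, so that the extra unmatched ball is indeed white), and multiplying by $n_2/B$ reproduces the stated expression. The red-leftover case handles $n_1 - l$ odd symmetrically.

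The only real work is the algebraic simplification in the even case: one must carefully cancel the factorials and powers of $2$ coming from the three double factorials against $(B-1)!!$, while keeping track of the parity constraints that determine when $\P(L=l)$ is nonzero. Once the even case is reduced to the multinomial form, the odd case costs essentially nothing thanks to the conditioning argument, and the equivalence of the multinomial and binomial-product presentations is the routine identity noted above.
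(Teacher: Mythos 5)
The paper states this lemma explicitly \emph{without} proof (``given without proof''), so there is no internal argument to compare yours against; judged on its own, your proof is correct and complete. The realization of the sequential pairwise removal as a uniformly chosen (near-)perfect matching on the $B$ labelled balls is valid by exchangeability (every unordered pairing arises with the same probability under sequential random draws), and your count $\binom{n_1}{l}\binom{n_2}{l}\,l!\,(n_1-l-1)!!\,(n_2-l-1)!!$ of matchings with exactly $l$ mixed pairs is a genuine bijective enumeration, with the parity constraint appearing exactly where it should. The algebra you defer does collapse as claimed: writing $n_1-l=2a$, $n_2-l=2b$, and using $(2r-1)!!=(2r)!/(2^r\,r!)$ together with $(B-1)!!=B!/\bigl(2^{B/2}(B/2)!\bigr)$, the quotient equals $\frac{n_1!\,n_2!}{B!}\cdot\frac{2^l\,(B/2)!}{l!\,a!\,b!}=2^l\binom{B/2}{l,a,b}\big/\binom{B}{n_1}$, which is the stated formula. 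For odd $B$, your conditioning step is also sound: each ball is the leftover in exactly $(B-2)!!$ near-perfect matchings, so the leftover is uniform and, given the leftover, the matching on the remaining $B-1$ balls is uniform; the prefactors $n_2/B$ and $n_1/B$ then yield the two stated cases, and the identity $\frac{n_2}{B}\big/\binom{B-1}{n_1}=1\big/\binom{B}{n_1}$ (similarly $\frac{n_1}{B}\big/\binom{B-1}{n_1-1}=1\big/\binom{B}{n_1}$) gives the second displayed form in each. Your proof thus supplies precisely the missing justification for the lemma the authors chose to omit.
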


The probabilities given in this lemma go to zero exponentially fast for most choices of $l$. In the next lemma, we give a left tail estimate for $L_S$ given that the numbers of stubs of $S$ and $S^c$ are $n_1$ and $n_2$. Note that the expected number of links in case of exactly $n_1$ and $n_2$ stubs is approximately $\lambda:=n_1n_2/(n_1+n_2)$. We denote $x\log(x)$ by $\nlg(x)$ and define $\nlg(0)=0$. 

Define the function $\phi:\mathbb{R}^3_+\to [0,\infty)$ by
\[
\phi(a_1,a_2;\rho) = \frac12 \nlg(a_1+a_2)+\frac12\nlg(a_1-\rho)+\frac12\nlg(a_2-\rho)-\nlg(a_1)-\nlg(a_2)+\nlg(\rho)
\]
whenever $a_1a_2\geq \rho(a_1+a_2)$ and let $\phi(a_1,a_2;\rho)=0$ otherwise. Since $\phi(a_1,a_2,\rho)=0$ when $a_1a_2= \rho(a_1+a_2)$, it follows that $\phi$ is a continuous function. Note that $\phi$ is decreasing in $\rho$. Furthermore, it satisfies the following scaling property
\begin{align}\label{eq:scalephi}
\phi(a_1N,a_2N;\rho N) = \phi(a_1,a_2;\rho) N.
\end{align}

\begin{lemma}\label{lemma:links|stubs}
	Let $G=(V,E)$ be a configuration model graph with degree distribution $D$. Let $S\subseteq V$ and let $L_S$ be the number of links between $S$ and $S^c$. For all $n_1,n_2,l\in\mathbb{N}$,
	\begin{equation}\label{eq1:phi}
	\P\left(L_S\leq l\Bigm| \sum_{i\in S} D_i = n_1, \sum_{j\in S^c} D_j = n_2\right) \leq e(l+1)^{3/2} e^{-\phi(n_1,n_2;l)}.
	\end{equation}
\end{lemma}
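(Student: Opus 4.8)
The plan is to condition on the stub counts and reduce everything to a sharp estimate for a single value of $L_S$. Writing $B=n_1+n_2$ and summing the point probabilities from Lemma \ref{lem:comb}, I would start from
\[
\P\Bigl(L_S\le l\Bigm|\textstyle\sum_{i\in S}D_i=n_1,\ \sum_{j\in S^c}D_j=n_2\Bigr)=\sum_{l'=0}^{l}\P(L_S=l'),
\]
where terms of the wrong parity simply vanish and, in the odd-$B$ case, each term carries an extra factor $n_1/B$ or $n_2/B$ that is at most $1$ and only helps. The heart of the argument is the single-term bound $\P(L_S=l')\le e\sqrt{l'+1}\,e^{-\phi(n_1,n_2;l')}$. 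Granting this, the conclusion is immediate: since $\phi$ is decreasing in its third argument, $\phi(n_1,n_2;l')\ge\phi(n_1,n_2;l)$ for $l'\le l$, so replacing $e^{-\phi(n_1,n_2;l')}\le e^{-\phi(n_1,n_2;l)}$ and $\sqrt{l'+1}\le\sqrt{l+1}$ in each of the at most $l+1$ terms yields exactly $e(l+1)^{3/2}e^{-\phi(n_1,n_2;l)}$.

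To prove the single-term bound I would insert the closed form of Lemma \ref{lem:comb} (treating the even-$B$ case, the odd one being analogous up to the harmless factor above) and apply two-sided Stirling, $\sqrt{2\pi m}\,(m/e)^m\le m!\le\sqrt{2\pi m}\,(m/e)^m e^{1/(12m)}$, to each of the seven factorials. The first point to verify is that the $m\log m$ contributions reassemble precisely into $-\phi(n_1,n_2;l')$: the factors $e^{-m}$ cancel because the arguments in numerator and denominator both sum to $3B/2$, the powers of $2$ coming from the halved arguments cancel against the $2^{l'}$, and what remains is $-\tfrac12\nlg(B)+\nlg(n_1)+\nlg(n_2)-\nlg(l')-\tfrac12\nlg(n_1-l')-\tfrac12\nlg(n_2-l')$, which is $-\phi(n_1,n_2;l')$ by the definition of $\phi$. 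The surviving $\sqrt{2\pi m}$ factors then leave a polynomial prefactor equal to $\tfrac1{\sqrt\pi}\bigl(\tfrac1{l'}\cdot\tfrac{n_1n_2}{(n_1-l')(n_2-l')}\bigr)^{1/2}$ times a Stirling-error factor bounded by $e^{1/(6B)+1/(12n_1)+1/(12n_2)}\le e^{1/4}$.

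It then remains to bound this prefactor by $e\sqrt{l'+1}$. Here I would first dispose of the trivial regime: if $l\ge\lambda:=n_1n_2/(n_1+n_2)$ then $\phi(n_1,n_2;l)=0$ and the claimed bound is at least $e$, so $\P(\cdots)\le1$ settles it. For $l<\lambda$ every relevant $l'$ satisfies $l'<\lambda$, and assuming without loss of generality $n_1\le n_2$ one has $\tfrac{n_2}{n_2-l'}\le\tfrac{n_1}{n_1-l'}\le 1+\tfrac{l'}2$ on the interior range $n_1-l'\ge2$, whence
\[
\frac1{l'}\cdot\frac{n_1n_2}{(n_1-l')(n_2-l')}\le\frac1{l'}\Bigl(1+\frac{l'}2\Bigr)^2=\frac{(l'+2)^2}{4l'}\le l'+2\le 2(l'+1),
\]
so that the prefactor is at most $\tfrac{e^{1/4}}{\sqrt\pi}\sqrt{2(l'+1)}\le e\sqrt{l'+1}$, as needed.

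The step I expect to be the main obstacle is not the interior estimate above but the boundary values of $l'$ at which one of $l'$, $(n_1-l')/2$ or $(n_2-l')/2$ vanishes, where Stirling's formula is inapplicable and the prefactor formula degenerates. These finitely many cases must be estimated directly from Lemma \ref{lem:comb}; they produce $O(1)$ prefactors, and the role of carrying the generous constant $e$ (rather than the $\approx1.02$ that the interior analysis actually delivers) is precisely to absorb these boundary contributions together with the parity bookkeeping, so that the clean per-term inequality holds uniformly in $l'$ and the summation in the first paragraph goes through without further care.
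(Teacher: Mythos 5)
Your proposal is correct and takes essentially the same route as the paper's proof: dismiss the case $\phi(n_1,n_2;l)=0$ trivially, apply Stirling's bounds to the exact formula of Lemma \ref{lem:comb} so that the $\nlg$ terms reassemble into $-\phi(n_1,n_2;l')$ with a polynomial prefactor bounded by $e\sqrt{l'+1}$ (treating $l'=0$ and the odd-$B$ case separately, as the paper also does), and then sum the at most $l+1$ terms using that $\phi$ is decreasing in its last argument. The only difference is cosmetic: you control the prefactor via $\frac{n_1}{n_1-l'}\le 1+\frac{l'}{2}$, while the paper uses $l'(n_i-l')\ge n_i/2$, both giving the same conclusion.
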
 
\begin{proof} Fix $n_1$ and $n_2$. If $n_1n_2\leq l(n_1+n_2)$, then $\phi(n_1,n_2;l)=0$ and (\ref{eq1:phi}) clearly holds. From now on assume $n_1n_2> l(n_1+n_2)$, which in particular implies that $l<\min(n_1,n_2)$. We will use the following bounds that come from Stirling's approximation:
	\[
	\frac12\log(2\pi n)+\nlg(n)-n<\log(n!)<\frac12\log(2\pi n)+\nlg(n)-n+\frac{1}{12n}.
	\]
	We will assume that $B=n_1+n_2$ even, for $B$ odd a similar approach works. By Lemma \ref{lem:comb}, we may assume that $n_1-l$ and $n_2-l$ 	are even as well. In that case
	\[
	\mathbb{P}(L_S=l)  = \frac{2^l(\frac12(n_1+n_2))!n_1!n_2!}{l!(\frac12(n_1-l))!(\frac12(n_2-l))!(n_1+n_2)!}.
	\]
	It follows by Stirling's bounds that  for $l\geq 1$ (and therefore $n_1,n_2\geq 2$)
	\begin{align*}
	\log(\mathbb{P}(L=l)) \leq &\  l\log(2) + \nlg(\frac12(n_1+n_2))+\nlg(n_1)+\nlg(n_2)-\nlg(l)\\&\ -\nlg(\frac12(n_1-l)) -\nlg(\frac12(n_2-l))-\nlg(n_1+n_2)\\
	&\ -\frac12\log(2\pi)+\frac12\log\left(\frac{\frac12(n_1+n_2)n_1n_2}{l\frac12(n_1-l)\frac12(n_2-l)(n_1+n_2)}\right)\\
	&+\frac{1}{6(n_1+n_2)}+\frac{1}{12n_1}+\frac{1}{12n_2}\\
	= &\  -\phi(n_1,n_2;l)+\frac12\log\left(\frac{n_1n_2}{\pi l(n_1-l)(n_2-l)}\right)+\frac{1}{6(n_1+n_2)}+\frac{1}{12n_1}+\frac{1}{12n_2}.\\
	\end{align*}
	Since $l(n_i-l)\geq n_i-1\geq n_i/2$, we obtain
	\[
	\log(\mathbb{P}(L_S=l))\leq -\phi(n_1,n_2;l)+\frac12\log(l)+1.
	\]
	For $l=0$, we find
	\[
	\log(\mathbb{P}(L_S=l))\leq -\phi(n_1,n_2;l)+1,
	\]
	so that 
	\[
	\P(L_S=l) \leq e\sqrt{l+1}\cdot e^{-\phi(n_1,n_2;l)}
	\]
	for all $l\geq 0$. Since $\phi$ is decreasing in $l$, we have $\phi(n_1,n_2;k)\geq \phi(n_1,n_2;l)$ for $k\leq l$, and therefore
	\begin{equation*}
	\P\left(L_S\leq l\Bigm| \sum_{i\in S} D_i = n_1, \sum_{j\in S^c} D_j= n_2\right) \leq e\sum_{k=0}^{l}\sqrt{k+1}e^{-\phi(n_1,n_2;k)} \leq e(l+1)^{3/2}e^{-\phi(n_1,n_2;l)}. 
	\end{equation*}
\end{proof}

We expect the number of links between $S$ and $S^c$ to be of order $N$, so we choose $l=\rho N$. If $\rho$ is smaller than the expected fraction of links, the probability to have less than $l$ links goes to zero exponentially fast. The next lemma quantifies the rate of convergence, and it is an adaptation of Varadhan's Integral Lemma. 

\begin{lemma}\label{lem:upperbound}
	Fix $\gamma \in (0,1)$. For any $S\subseteq V$, $|S|=k$, with $k=\lceil \gamma N\rceil$, and any $\rho> 0$, we have that
	\[\limsup_{N\to \infty} \frac{1}{N}\log\left(\P(L_S\leq \rho N)\right) \leq -\inf_{a_1,a_2\geq 0}\left[ \phi(a_{1},a_{2}; \rho) +\gamma R\left(\frac{a_1}{\gamma}\right) +(1-\gamma) R\left(\frac{a_2}{1-\gamma}\right)\right].\]
Note that for $\rho\geq \gamma(1-\gamma)\E(D)$, the right-hand side equals $0$.
\end{lemma}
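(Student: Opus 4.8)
The plan is to condition on the stub counts $S_1=\sum_{i\in S}D_i$ and $S_2=\sum_{j\in S^c}D_j$ and to combine the conditional tail bound of Lemma~\ref{lemma:links|stubs} with the large deviation estimates of Lemma~\ref{lemma:ldp_stubs}. Since $L_S$ is integer valued we may replace $\rho N$ by $l=\lfloor \rho N\rfloor$, and conditioning on the independent pair $(S_1,S_2)$ gives
\[
\P(L_S\le \rho N)=\E\big[\P(L_S\le l\mid S_1,S_2)\big]\le e(l+1)^{3/2}\,\E\big[e^{-\phi(S_1,S_2;\,l)}\big],
\]
where the inequality is Lemma~\ref{lemma:links|stubs}. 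Using that $\phi$ is decreasing in its last argument and that $l\le\rho N$, we may replace $l$ by $\rho N$ inside $\phi$, and the scaling identity \eqref{eq:scalephi} then yields
\[
\P(L_S\le \rho N)\le e(l+1)^{3/2}\,\E\Big[e^{-N\phi(S_1/N,\,S_2/N;\,\rho)}\Big].
\]
After taking $\tfrac1N\log$ the polynomial prefactor disappears, and we are left with a Varadhan-type exponential integral of the continuous, nonnegative function $\phi(\cdot,\cdot;\rho)$ against the law of $(S_1/N,S_2/N)$.

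Next I would invoke the upper-bound half of Varadhan's integral lemma. By independence of $S_1$ and $S_2$, Lemma~\ref{lemma:ldp_stubs} furnishes the large deviation \emph{upper} bound for the pair $(S_1/N,S_2/N)$ with rate function $J(a_1,a_2)=\gamma R(a_1/\gamma)+(1-\gamma)R(a_2/(1-\gamma))$: on any closed rectangle the joint probability factorises, and general closed sets are covered by finitely many rectangles. Since $-\phi(\cdot,\cdot;\rho)$ is continuous and bounded above by $0$, the Varadhan upper bound should give
\[
\limsup_{N\to\infty}\frac1N\log\E\Big[e^{-N\phi(S_1/N,S_2/N;\rho)}\Big]\le \sup_{a_1,a_2\ge0}\big[-\phi(a_1,a_2;\rho)-J(a_1,a_2)\big],
\]
which is exactly $-\inf_{a_1,a_2\ge0}[\phi(a_1,a_2;\rho)+\gamma R(a_1/\gamma)+(1-\gamma)R(a_2/(1-\gamma))]$, the claimed bound.

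The delicate point, and what I expect to be the main obstacle, is that the rate function $J$ need not be \emph{good} (its sublevel sets need not be compact) when $D$ has no positive exponential moments, so the usual tail hypothesis of Varadhan's upper bound is not automatic and the naive level-set summation diverges. I would resolve this by exploiting the geometry of $\phi$ and splitting the expectation into three pieces. The set $\{\phi(\cdot,\cdot;\rho)=0\}=\{a_1a_2\le\rho(a_1+a_2)\}$ is unbounded, but there the integrand equals $1$, so its contribution is just $\P\big((S_1/N,S_2/N)\in\{a_1a_2\le\rho(a_1+a_2)\}\big)$; in the only nontrivial regime $\rho<\gamma(1-\gamma)\E[D]$ this event forces $S_1/N$ or $S_2/N$ below its mean, and is controlled by the \emph{left} tail of $R$, which always decays exponentially since $D\ge0$ makes $\E[e^{\lambda D}]$ finite for all $\lambda\le0$. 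On the complementary set $\{\phi>0\}$ one checks that $\phi(a_1,a_2;\rho)\to\infty$ as $\max(a_1,a_2)\to\infty$, so for large $C$ the region $\{\phi>C\}$ contributes at most $e^{-NC}$, while $\{0<\phi\le C\}$ is compact, where the standard discretisation underlying Varadhan's lemma applies without any goodness assumption. Combining the three contributions gives the upper bound rigorously.

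Finally, for the closing remark I would simply substitute the mean values $a_1=\gamma\E[D]$ and $a_2=(1-\gamma)\E[D]$, at which both $R$-terms vanish and $\phi(a_1,a_2;\rho)=0$ precisely when $\gamma(1-\gamma)\E[D]^2\le\rho\E[D]$, i.e.\ $\rho\ge\gamma(1-\gamma)\E[D]$; since the infimand is nonnegative, the infimum then equals $0$.
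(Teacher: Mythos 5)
Your opening reduction is correct and mirrors the first step of the paper's own argument: conditioning on $(S_1,S_2)$, applying Lemma \ref{lemma:links|stubs} pointwise, replacing $l=\lfloor\rho N\rfloor$ by $\rho N$ inside $\phi$ via monotonicity in the last argument, and using the scaling property \eqref{eq:scalephi} legitimately reduces the problem to bounding $\frac1N\log\E\bigl[\exp\bigl(-N\phi(S_1/N,S_2/N;\rho)\bigr)\bigr]$. You also correctly diagnose the real obstruction: $J(a_1,a_2)=\gamma R(a_1/\gamma)+(1-\gamma)R(a_2/(1-\gamma))$ need not be a good rate function (if $D$ has no positive exponential moments, $R$ vanishes on all of $[\E(D),\infty)$, so $J$ vanishes on an unbounded quadrant), so Varadhan's upper bound cannot be invoked off the shelf.

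The gap is in your repair, which rests on two claims about $\phi$ that are false: that $\phi(a_1,a_2;\rho)\to\infty$ as $\max(a_1,a_2)\to\infty$ within $\{\phi>0\}$, and that $\{0<\phi\le C\}$ is compact. The function $\phi$ is continuous and vanishes on the \emph{unbounded} set $\{a_1a_2\le\rho(a_1+a_2)\}$, whose boundary curve $a_1a_2=\rho(a_1+a_2)$ has asymptotes $a_1=\rho$ and $a_2=\rho$; consequently $\phi$ stays bounded along unbounded sequences hugging that curve. Explicitly, taking $a_2=\rho+c/\log(a_1)$ with $a_1\to\infty$, one checks $a_1a_2>\rho(a_1+a_2)$ while $\phi(a_1,a_2;\rho)=\tfrac12(a_2-\rho)\log(a_1)+o(1)\to c/2$, so for \emph{every} $C>0$ the set $\{0<\phi\le C\}$ is unbounded. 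Thus your "compact middle piece" does not exist, and the far-out portion of $\{0<\phi\le C\}$ (the strip where $\min(a_1,a_2)$ is close to $\rho$) is covered by none of your three cases: it is not killed by $e^{-NC}$, and the discretisation argument you plan there has no compactness to work with. The hole is repairable with the tool you already use for the zero set --- in that strip $\min(a_1,a_2)\le\rho+o(1)$, so the left tail of $R$ gives exponential decay, and the resulting rate is still at least $\inf[\phi+J]$ because the half-lines $\{a_1=\rho\}$ and $\{a_2=\rho\}$ lie inside $\{\phi=0\}$ --- but recognizing and carrying out this step is exactly what is missing. For comparison, the paper sidesteps goodness entirely by a different device: $(n_1,n_2)\mapsto\P(L_S\le\rho N\mid S_1=n_1,\,S_2=n_2)$ is decreasing in each argument, so after partitioning the plane into finitely many grid cells (the last one unbounded) each cell's conditional probability is bounded by its value at the lower-left corner; no compactness of level sets is ever required.
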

\begin{proof}
	Fix a large integer $K$ and define
	\[ a_j= \frac{j}{\sqrt{K}} \mbox{ for } j=0,1,\ldots K\mbox{ and } a_{K+1}=+\infty.\]
	Furthermore, define
	\[ A_{N,j} = [a_jN, a_{j+1}N[ \mbox{ for } j=0,1,\ldots, K.\]
	Then
	\begin{align*}
	\P(L_S\leq \rho N)& = \sum_{j_1=0}^{K} \sum_{j_2=0}^{K} \P(L_S\leq \rho N \mid S_1 \in  A_{N,j_1}, S_2\in A_{N,j_2}) \P(S_1 \in  A_{N,j_1})\P(S_2\in A_{N,j_2})\\
	& \leq \sum_{j_1=0}^{K} \sum_{j_2=0}^{K} \P(L_S\leq \rho N \mid S_1 = \lceil a_{j_1}N\rceil, S_2=\lceil a_{j_2}N \rceil) \P(S_1 \in  \overline{A_{N,j_1}})\P(S_2\in \overline{A_{N,j_2}})\\
	\end{align*}
	We used the fact that $(n_1,n_2)\mapsto \P(L_S\leq \rho N \mid S_1 = n_1, S_2= n_2)$ is decreasing in $n_1$ and in $n_2$: the more stubs you have, the more likely it is to have more links. 
	
 We can use this to get a large deviation result, also using property \eqref{eq:scalephi} of $\phi$ and Lemma \ref{lemma:ldp_stubs} and Lemma \ref{lemma:links|stubs}:
	\[
	\limsup_{N\to \infty} \frac{1}{N}\log\left(\P(L_S\leq \rho N)\right)  \leq \sup_{0\leq j_1,j_2\leq K}\left[ -\phi(a_{j_1},a_{j_2}; \rho) -\inf_{a_1\in [a_{j_1},a_{j_1+1}]}\gamma R(\frac{a_1}{\gamma}) -\inf_{a_2\in [a_{j_2},a_{j_2+1}]}(1-\gamma) R(\frac{a_2}{1-\gamma})\right].
	\]
	Since $\phi$ is a well-behaved function and $R$ is convex, we can take the limit for $K\to \infty$, and conclude that
	\[
	\limsup_{N\to \infty} \frac{1}{N}\log\left(\P(L_S\leq \rho N)\right)  \leq -\inf_{a_1,a_2\geq 0}\left[ \phi(a_1,a_2; \rho) + \gamma R(\frac{a_1}{\gamma}) + (1-\gamma) R(\frac{a_2}{1-\gamma})\right].\]
The last comment follows from taking $a_1=\gamma\E(D)$ $a_2=(1-\gamma)\E(D)$. If $\rho\geq \gamma(1-\gamma)\E(D)$, then $\phi(a_1,a_2;\rho)=0$, and therefore the right-hand side equals $0$.
\end{proof}

Inspired by the previous lemma, we define for $\gamma\in(0,1)$ and $\rho\in[0,\gamma(1-\gamma)\E(D))$
\begin{equation}\label{eq:defPsi}
 \Psi(\gamma,\rho) = \inf_{(a_1,a_2)\in \R_+^2} \phi(a_1,a_2;\rho) + \gamma R(a_1/\gamma) + (1-\gamma)R(a_2/(1-\gamma)).\end{equation}
This function quantifies the rate at which the probability that a set  $S$ of size $\gamma N$ has less than $\rho N$ links to $S^c$ goes to zero. Since we aim at a uniform lower bound, this rate has to be sufficiently large to compensate for the binomial coefficient $\binom{N}{\gamma N}$, i.e. we look for $\gamma$ and $\rho$ for which $\Psi(\gamma,\rho)>H(\gamma)$. The next lemma gives a sufficient condition for existence of a uniform lower bound for sets of size around $N/2$.

\begin{lemma}\label{lem:uniform_configuration} Let $G=(V,E)$ be a configuration model graph on $N$ nodes with degree distribution $D$. If $\mathbb{E}[2^{-\frac12 D}]<\frac12$, then there exist $\gamma\in (0,\frac12)$ and $\rho\in (0,\gamma(1-\gamma)\mathbb{E}[D])$ such that $\Psi(\gamma,\rho)>H(\gamma)$. Moreover, with high probability, 
\[
L_S \geq M_{N,k} := \rho N
\]
for all $\gamma N\leq k\leq (1-\gamma) N$ and all $S\in\mathcal{S}_k$.
\end{lemma}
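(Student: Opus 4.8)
The statement has two layers: a deterministic analytic fact (existence of $\gamma,\rho$ with $\Psi(\gamma,\rho)>H(\gamma)$) and a quenched probabilistic bound (the uniform inequality $L_S\ge \rho N$). Since the uniform bound must also cover sets of size about $N/2$, where $H$ attains its maximum $\log 2$, I would prove both layers simultaneously by working in a neighbourhood of the point $(\gamma,\rho)=(\tfrac12,0)$ and arranging that the strict inequality $\Psi(\gamma',\rho)>H(\gamma')$ holds for \emph{every} $\gamma'\in[\gamma,1-\gamma]$, not merely at the chosen $\gamma$.

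The heart of the matter is the evaluation of $\Psi(\tfrac12,0)$. The plan is to show
\[
\Psi(\tfrac12,0)=-\log \E\!\left[2^{-\frac12 D}\right]=-\Lambda(\beta),\qquad \beta:=-\tfrac12\log 2,\ \ \Lambda(\lambda):=\log\E[e^{\lambda D}],
\]
so that the hypothesis $\E[2^{-\frac12 D}]<\tfrac12$ is exactly the statement $\Psi(\tfrac12,0)>\log2=H(\tfrac12)$. To compute the infimum I would use convex duality twice. First, at $\rho=0$ the function $\phi$ collapses to $\phi(a_1,a_2;0)=\tfrac12(a_1+a_2)H\!\big(\tfrac{a_1}{a_1+a_2}\big)$, and the binary-entropy conjugacy $H(t)=\inf_{\theta}\big(\log(1+e^{\theta})-\theta t\big)$ lets me write $\phi(a_1,a_2;0)=\inf_\theta\big[\tfrac12\log(1+e^\theta)(a_1+a_2)-\tfrac{\theta}{2}a_1\big]$, an infimum of affine functions. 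Substituting this into the definition of $\Psi(\tfrac12,0)$ and interchanging the infima, the inner minimisation over $(a_1,a_2)$ decouples into two one-dimensional problems of the form $\inf_{u\ge0}\big[R(u)+cu\big]$, each of which equals $-\Lambda(-c)$ by Legendre duality ($R=\Lambda^{\ast}$, $\Lambda^{\ast\ast}=\Lambda$). This reduces $\Psi(\tfrac12,0)$ to a single even function $g(\theta)$ of one variable; evaluating $g(0)$ gives $-\Lambda(\beta)$, and the remaining task is to show that $\theta=0$ is the global minimiser of $g$.

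Establishing that $\theta=0$ (equivalently, the symmetric interior critical point $a_1=a_2=\tfrac12\Lambda'(\beta)$ of the original two-variable problem) is the \emph{global} minimiser is the step I expect to be the main obstacle. The first-order condition is automatic from the symmetry $g(\theta)=g(-\theta)$, but because $\phi(\cdot,\cdot;0)$ is concave (not convex) in $(a_1,a_2)$, one cannot simply linearise $R$: the convexity of $\Lambda$ must be used in an essential, non-local way. I would verify global optimality by a monotonicity argument for $g$ on $[0,\infty)$, which reduces to an inequality of the shape $\Lambda'(x+\tfrac{\theta}{2})/\Lambda'(x)\le e^{\theta}$, and—crucially—by controlling the boundary $a_i\to0$, where $\tfrac12 R(0)=-\tfrac12\log\P(D=0)$ competes with the interior value; ruling out a boundary minimiser is the delicate point and is where the structure of $D$ (in particular, a not-too-large atom at $0$) enters. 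For the concrete degree distributions of interest (constant $D$ and $D\sim\mathrm{Pois}(\mu)$) these checks are explicit and the interior critical point wins.

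Granting $\Psi(\tfrac12,0)=-\Lambda(\beta)>\log2$, I would finish as follows. The map $(\gamma,\rho)\mapsto\Psi(\gamma,\rho)$ is continuous (it is an infimum of a jointly continuous, coercive family, so the minimisers stay in a fixed compact set) and $\phi$ is continuous up to $\rho=0$; hence $\Delta(\gamma',\rho):=\Psi(\gamma',\rho)-H(\gamma')$ is continuous with $\Delta(\tfrac12,0)>0$. Using symmetry of $\Psi$ and $H$ about $\tfrac12$ and the fact that $[\gamma,1-\gamma]$ shrinks to $\{\tfrac12\}$ as $\gamma\uparrow\tfrac12$, I can pick $\gamma<\tfrac12$ and $\rho\in(0,\gamma(1-\gamma)\E[D])$ small enough that $\Psi(\gamma',\rho)>H(\gamma')$ for all $\gamma'\in[\gamma,1-\gamma]$; in particular $\Psi(\gamma,\rho)>H(\gamma)$, proving the existence claim. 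Put $2\eta:=\min_{\gamma'\in[\gamma,1-\gamma]}\big(\Psi(\gamma',\rho)-H(\gamma')\big)>0$. For the quenched bound, fix $k=\lceil\gamma'N\rceil$: Lemma \ref{lem:upperbound} gives $\P(L_S\le\rho N)\le e^{-(\Psi(\gamma',\rho)-o(1))N}$ for a single $S\in\mathcal S_k$, while $|\mathcal S_k|=\binom{N}{k}\le e^{NH(\gamma')}$ by \eqref{eq:bin_bound}, so a union bound yields
\[
\P\big(\exists\,S\in\mathcal S_k:\ L_S\le\rho N\big)\le e^{(H(\gamma')-\Psi(\gamma',\rho)+o(1))N}\le e^{-\eta N}
\]
for $N$ large. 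Summing over the at most $N$ values of $k$ in $[\gamma N,(1-\gamma)N]$ keeps the bound $\le Ne^{-\eta N}\to0$, so with high probability $L_S\ge\rho N=M_{N,k}$ for all such $k$ and all $S\in\mathcal S_k$. The only additional care needed here is that the $o(1)$ term in Lemma \ref{lem:upperbound} be uniform over $\gamma'\in[\gamma,1-\gamma]$, which follows from the discretisation used in its proof.
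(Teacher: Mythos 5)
Your proposal retraces the paper's own route: everything hinges on the identity $\Psi(\frac12,0)=-\log\E[2^{-\frac12 D}]$, followed by continuity of $\Psi$, symmetry about $\gamma=\frac12$, and the union bound against $\binom{N}{k}\leq e^{NH(k/N)}$; your closing paragraph is essentially the paper's closing display. The single step you defer --- that $\theta=0$ is the \emph{global} minimiser of your $g$ --- is exactly the step the paper settles by asserting that the infimum in (\ref{eq:defPsi0}) may be restricted to the diagonal $u_1=u_2$, because ``on the line $u_1+u_2=c$, the functions $\phi$ and $R(u_1)+R(u_2)$ are convex and take their minimal value for $u_1=u_2$''. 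That justification is incorrect: restricted to such a line, $\phi(u_1,u_2;0)=\frac12\bigl((u_1+u_2)\log(u_1+u_2)-u_1\log(u_1)-u_2\log(u_2)\bigr)$ has second derivative $-\frac12\bigl(u_1^{-1}+u_2^{-1}\bigr)<0$, so it is concave and maximal, not minimal, at the midpoint. In other words, the paper contains no valid proof of the step you identified as the main obstacle.

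That obstacle is in fact insurmountable under the stated hypothesis, so your instinct about the boundary $a_i\to 0$ was exactly right. Take $\P(D=M)=\frac35$, $\P(D=0)=\frac25$ with $M$ large; then $\E[2^{-\frac12 D}]=\frac25+\frac35\,2^{-M/2}<\frac12$, so the hypothesis of the lemma holds. Here $R(0)=-\log\P(D=0)=\log\frac52$, and evaluating the infimum in (\ref{eq:defPsi}) at $(a_1,a_2)=(\gamma\E[D],0)$, respectively $(0,(1-\gamma)\E[D])$ --- points at which $\phi$ vanishes for every $\rho$, and which are admissible in the limit $a_i\downarrow 0$ by convexity of $R$ even if one insists on $a_i>0$ --- gives
\[
\Psi(\gamma,\rho)\ \leq\ \min(\gamma,1-\gamma)\log\tfrac{5}{2}\ <\ \min(\gamma,1-\gamma)\bigl(\log 2+\tfrac{1}{2}\bigr)\ \leq\ H(\gamma)\qquad\text{for all }\gamma\in(0,1),\ \rho\geq 0.
\]
So no pair $(\gamma,\rho)$ with $\Psi(\gamma,\rho)>H(\gamma)$ exists, $\Psi(\frac12,0)$ is strictly smaller than $-\log\E[2^{-\frac12 D}]$ (in your dual picture, $g(\pm\infty)=-\frac12\log\P(D=0)<g(0)$, so $\theta=0$ is not the global minimiser), and the lemma as stated is false. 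Consequently the gap in your proposal cannot be filled in this generality: one must add a hypothesis limiting the atom of $D$ at $0$ (for instance $D\geq 1$ a.s., precisely the assumption the paper declares unnecessary), or restrict to concrete distributions. Your fallback of verifying the interior minimiser directly for constant and Poisson degrees is therefore not a shortcut but the substance of any correct proof, and it is all that the two propositions following the lemma actually require, since there $\P(D=0)$ is zero or exponentially small.
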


\begin{proof}
We will maximize $\Psi(\gamma,\rho)$. Since $\phi$ is decreasing in $\rho$, $\Psi(\gamma,\rho)$ is decreasing in $\rho$ as well. For $\rho=0$ and all $(a_1,a_2)\in\mathbb{R}^2_+$ the inequality $a_1a_2\geq \rho(a_1+a_2)$ holds, so that $\phi(a_1,a_2;0)=\frac12(a_1+a_2)\log(a_1+a_2)-\frac12a_1\log(a_1)-\frac12a_2\log(a_2)$ for $(a_1,a_2)\in\R_+^2$.
Therefore 
\begin{align}
\Psi(\gamma,0) &= \inf_{(a_1,a_2)\in \R_+^2} \phi(a_1,a_2;0) + \gamma R(a_1/\gamma) + (1-\gamma)R(a_2/(1-\gamma))\nonumber \\
& =  \inf_{(u_1,u_2)\in \R_+^2} \phi(\gamma u_1,(1-\gamma)u_2;0) + \gamma R(u_1) + (1-\gamma) R(u_2).\label{eq:defPsi0}
\end{align}
It is not hard to check that $\gamma\mapsto \phi(\gamma u_1,(1-\gamma)u_2;0)$ is a concave function on $(0,1)$ for all $(u_1,u_2)\in \R^2_+$, so $\Psi(\gamma;0)$ is a positive concave function, symmetric in $\gamma=\frac12$ and therefore maximal in $\gamma = \frac12$. At $\gamma=\frac12$, we find
\begin{align*}
\Psi(\frac12 , 0) &= \inf_{(u_1,u_2)\in \R^2_+} \frac14(u_1+u_2)\log(\frac12(u_1+u_2)) - \frac14 u_1\log(\frac12 u_1)-\frac14 u_2\log(\frac12 u_2) +\frac12(R(u_1) + R(u_2))\\
&= \inf_{u\in \R_+} \frac12u\log(u) - \frac12 u\log(\frac12 u)+R(u)\\
&= \inf_{u\in \R_+} \frac12 \log(2)u + R(u)\\
& = -\sup_{u\in \R_+} -\frac12 \log(2)u - R(u)\\
& = - \log\left(\E\left(e^{-\frac12 \log(2)D}\right)\right).
\end{align*}
For the second equality we use that on the line $u_1+u_2=c$, the functions $\phi$ and $R(u_1)+R(u_2)$ are convex and take their minimal value for $u_1=u_2$. The last equality follows from the fact that the rate function $R$ is the Legendre transform of the cumulant generating function $\lambda\mapsto \log(\E(e^{\lambda D}))$, and the Legendre transform is an involution.
This implies that
\[ \Psi(\frac12,0)>H(\frac12) \iff \E(2^{-\frac12 D}) < \frac12. \]
Since $\Psi(\gamma,\rho)$ is continuous in both arguments, there exists $\gamma_0\in (0,\frac12)$ and $\rho\in (0,\gamma(1-\gamma)\mathbb{E}[D])$ such that $\Psi(\gamma,\rho)>H(1/2)\geq H(\gamma)$ for all $\gamma\in (\gamma_0,1-\gamma_0)$ whenever $\E(2^{-\frac12 D}) < \frac12$.

By Lemma \ref{lem:upperbound}, for $N$ large enough and $\gamma_0 N \leq k\leq (1-\gamma_0) N$, 
\[
\mathbb{P}(\exists k\in[\gamma_0N,(1-\gamma_0)N],\exists S\in\mathcal{S}_{N,k}:L_S\leq \rho N)\leq N\cdot\binom{N}{N/2} e^{-N\cdot\Psi(\gamma,\rho)}\leq N\cdot e^{-N\cdot(\Psi(\gamma,\rho)-H(1/2))},
\]
which goes to zero if  $\E(2^{-\frac12 D}) < \frac12$.
\end{proof}

Using Jensen's inequality we see that
\[ \E(2^{-\frac12 D}) \geq 2^{-\frac12\E(D)},\]
so our condition implies that $\E(D)>2$, which in turn implies that $\E(D^2)>2\E(D)$; this condition implies that there is one giant component in the graph with high probability. If our condition holds, and the infection rate $\tau$ is sufficiently large, then the extinction time of the contact process will be exponential in the number of individuals. 

Our condition focuses on the case $\gamma=\frac12$. However, there might exist degree distributions for which $\Psi(\frac12,0)\leq H(\frac12)$ but $\Psi(\gamma,0)>H(\gamma)$ for some $\gamma \in (0,\frac12)$. We don't know if such distributions exist, but we do know that if $\P(D\leq 1)>0$, then
\[ \lim_{\gamma\to 0} \frac{\Psi(\gamma,0)}{H(\gamma)} = 0.\]
This is a somewhat technical result that we state without proof, and it just shows that for a fixed distribution, it does not make sense to look at really small $\gamma$.

Our next theorem gives a (somewhat implicit) lower bound on $\tau$. Define the set
\[ \Gamma = \{ \gamma\in(0,\frac12)\mid \Psi(\gamma,0)>H(\gamma)\}.\]
If $\Gamma=\emptyset$, then our method cannot be used for that particular distribution of $D$. When $\Gamma\neq \emptyset$, by continuity of $\phi$ there exist $\gamma\in \Gamma$ and $\rho>0$ for which $\Psi(\gamma,\rho)>H(\gamma)$. In particular, this is the case if $\E(2^{-\frac12 D}) < \frac12$, as we have seen in Lemma \ref{lem:uniform_configuration}. For $\Gamma\neq \emptyset$, we define 
\begin{align}\label{eq:muzero}
 \mu_0= \sup\{\ \frac{\rho}{\gamma}\, \mid \gamma\in \Gamma,\rho \in (0,\gamma(1-\gamma)\E(D)) \mbox{ and }\Psi(\gamma,\rho)>H(\gamma)\}.
\end{align}
Note that $\mu_0\leq \E(D)$.

\begin{theorem}\label{theorem:confmod}
Let $G = (V,E)$ be a configuration model graph on $N$ nodes with degree distribution $D$. If $\mathbb{E}[2^{-\frac12 D}]<\frac12$ and $\tau >1/\mu_0$, then there exists a constant $c>0$ such that with high probability the extinction time $T_N$ of the contact process $(I_t)_{t\geq 0}$ on $G$ with $I_0=V$ satisfies
\[
\mathbb{E}[T_N]>e^{cN}.
\]
\end{theorem}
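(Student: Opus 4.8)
The plan is to reduce everything to Proposition \ref{prop:uniform_bound} by producing, with high probability, a uniform lower bound of the form $L_S\geq \rho N$ valid for all sets $S$ whose size lies in a window of \emph{linear} width, and then checking that the resulting product grows exponentially. The whole argument hinges on a single well-chosen pair $(\gamma^\ast,\rho^\ast)$.

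First I would use the hypothesis $\tau>1/\mu_0$ to select that pair. Since $\mu_0>1/\tau$, the definition \eqref{eq:muzero} of $\mu_0$ as a supremum guarantees a valid pair $(\gamma^\ast,\rho^\ast)$ --- meaning $\gamma^\ast\in\Gamma$, $\rho^\ast\in(0,\gamma^\ast(1-\gamma^\ast)\mathbb{E}[D])$ and $\Psi(\gamma^\ast,\rho^\ast)>H(\gamma^\ast)$ --- for which $\rho^\ast/\gamma^\ast>1/\tau$, that is $\tau\rho^\ast>\gamma^\ast$. This strict inequality is precisely what forces the leading factor $\tau\rho^\ast N/(\gamma^\ast N)=\tau\rho^\ast/\gamma^\ast$ in the product of Proposition \ref{prop:uniform_bound} to exceed $1$; it is the reason the threshold $1/\mu_0$ appears, and within this method it is essentially unavoidable, since the factors $\tau\rho^\ast N/k$ decrease in $k$ and the largest occurs at the lower end of the admissible range.

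Next I would promote the single inequality $\Psi(\gamma^\ast,\rho^\ast)>H(\gamma^\ast)$ to a uniform link bound on a whole band of sizes. Because $\Psi$ and $H$ are continuous, the set $\{\gamma':\Psi(\gamma',\rho^\ast)>H(\gamma')\}$ is open and contains $\gamma^\ast$, so I may fix $\delta>0$ so small that on the compact interval $[\gamma^\ast-\delta,\gamma^\ast+\delta]\subset(0,1)$ the gap $\Psi(\gamma',\rho^\ast)-H(\gamma')$ has a strictly positive minimum $m>0$, and moreover $\gamma^\ast+\delta<\tau\rho^\ast$. For each $k$ with $k/N\in[\gamma^\ast-\delta,\gamma^\ast+\delta]$, a union bound over the $\binom{N}{k}\leq e^{NH(k/N)}$ sets of size $k$, combined with the large-deviation estimate of Lemma \ref{lem:upperbound} (whose finite-$N$ ingredients are Lemmas \ref{lemma:ldp_stubs} and \ref{lemma:links|stubs}), giving $\mathbb{P}(L_S\leq\rho^\ast N)\leq e^{-N\Psi(k/N,\rho^\ast)+o(N)}$, yields $\mathbb{P}(\exists S\in\mathcal{S}_k:L_S\leq\rho^\ast N)\leq e^{-mN+o(N)}$; summing over the $O(N)$ admissible $k$ keeps the bound tending to $0$. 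Hence with high probability $L_S\geq\rho^\ast N$ simultaneously for all such $k$ and all $S\in\mathcal{S}_k$. This uniformization --- making the large-deviation bound hold across the band and beat the entropy factor \emph{everywhere} on it --- is the technical heart of the proof and the step I expect to be the main obstacle; it mirrors the union-bound computation at the end of the proof of Lemma \ref{lem:uniform_configuration}.

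Finally I would feed this into Proposition \ref{prop:uniform_bound}. On the above high-probability event, taking $k_0=\lceil(\gamma^\ast-\delta)N\rceil$, $k_1=\lfloor(\gamma^\ast+\delta)N\rfloor$ and $M_{N,k}=\lfloor\rho^\ast N\rfloor\geq 1$ gives
\[
\mathbb{E}[T_N]\geq\frac{1}{k_1}\prod_{k=k_0+1}^{k_1-1}\frac{\tau\lfloor\rho^\ast N\rfloor}{k}\geq\frac{1}{N}\left(\frac{\tau\rho^\ast}{\gamma^\ast+\delta}\right)^{k_1-k_0-1}(1-o(1)).
\]
Every factor is at least $\tau\rho^\ast/(\gamma^\ast+\delta)>1$, since $k\leq k_1\leq(\gamma^\ast+\delta)N<\tau\rho^\ast N$, and there are at least $2\delta N-O(1)$ of them, so taking logarithms and dividing by $N$ yields $\tfrac1N\log\mathbb{E}[T_N]\geq 2\delta\log\!\bigl(\tau\rho^\ast/(\gamma^\ast+\delta)\bigr)-o(1)$, which exceeds a positive constant $c$ for $N$ large. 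As this reasoning is valid on an event whose probability tends to $1$, it proves $\mathbb{E}[T_N]>e^{cN}$ with high probability.
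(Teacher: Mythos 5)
Your proposal is correct and follows essentially the same route as the paper's own proof: it extracts a pair $(\gamma^\ast,\rho^\ast)$ with $\tau\rho^\ast/\gamma^\ast>1$ from the definition of $\mu_0$, widens it by continuity of $\Psi$ and $H$ to a band of set sizes on which a union bound (as in Lemma \ref{lem:uniform_configuration}, via Lemma \ref{lem:upperbound}) yields the uniform link bound $L_S\geq\rho^\ast N$ with high probability, and then feeds this into Proposition \ref{prop:uniform_bound} to get a product of at least $2\delta N-O(1)$ factors each exceeding $1$. The only difference is presentational: the paper compresses the band construction and the union bound into a one-line appeal to Lemma \ref{lem:uniform_configuration} and continuity of $\Psi$, whereas you spell these steps out explicitly.
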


\begin{proof}
By Lemma \ref{lem:uniform_configuration} and continuity of $\Psi$, there exist $\gamma_0\in (0,\frac12)$, $\gamma_1>\gamma_0$ and $\rho >0 $ such that for all $\gamma\in (\gamma_0,\gamma_1)$ we have that $\Psi(\gamma,\rho)>H(\gamma)$ and 
\[
\frac{\rho\tau}{\gamma} >\frac{\rho\tau}{\gamma_1} >1.
\]
Let $k_0 = \lceil\gamma_0 N\rceil$ and $k_1=\lfloor \gamma_1 N\rfloor$. By Proposition \ref{prop:uniform_bound},
\[
\mathbb{E}[T_N] \geq \frac{1}{k_1}\prod_{k=k_0+1}^{k_1-1}\frac{\tau\rho N}{k}\geq \frac{1}{k_1} (\frac{\tau\rho}{\gamma_1})^{k_1-k_0-1} = (\frac{\tau\rho}{\gamma_1})^{(\gamma_1-\gamma_0)N-\mathcal{O}(\log(N))},
\]
implying the result of the theorem.
\end{proof}

\subsection*{Two examples}

We will consider our method for two examples, namely constant degree (leading to the random regular graph) and the Poisson distribution. If the expected degrees are large, then the asymptotic results and correction terms in these two examples are consistent with the results for the Erd\H{o}s-R\'enyi graph.

\subsubsection*{Constant degree}

First suppose that $\P(D=d)=1$, for some $d\in \{1,2,\ldots\}$. It is not hard to see that in that case,
\begin{align*}
\Psi(\gamma,0) &= \phi(d\gamma,d(1-\gamma);0)\\
&=\frac12d\log(d)-\frac12\gamma d\log(\gamma d)-\frac12(1-\gamma)d\log((1-\gamma)d)\\
& = \frac12dH(\gamma).
\end{align*}
This means that if $d\geq 3$, we will be able to find $\tau$ large enough, such that the expected extinction time grows exponentially with $N$. The next proposition gives a lower bound on $\tau$ that is close to $d^{-1}$. For slightly larger $\tau$, we also derive a lower bound for the growth rate. For large $d$, the expectation essentially grows like $(\tau d)^N$.

\begin{proposition} Let $T_N$ be the extinction time of the contact process $(I_t)$ on a configuration model graph $G=(V,E)$ with constant degree $d\geq 3$ and $I_0=V$. 
	\begin{enumerate}
		\item If $\tau>\frac 1 {d-2}$, there exists a constant $c>0$ such that w.h.p.
		\[
		\mathbb{E}[T_N]\geq e^{cN}.
		\] 
		\item Let $\lambda_d = 1-\sqrt{\log(2)/d}$. If $\tau >\frac{1}{d\lambda_d}$ and $\varepsilon>0$, then w.h.p.
		\[
		\mathbb{E}[T_N]\geq e^{((1-\varepsilon)\log(\tau d\lambda_d)+\frac 1{\tau d \lambda_d}-1)\cdot N}
		\]
	\end{enumerate} 
\end{proposition}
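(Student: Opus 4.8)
The first statement I would deduce directly from Theorem \ref{theorem:confmod}. For constant degree $\mathbb{E}[2^{-D/2}]=2^{-d/2}$, which is below $\frac12$ precisely when $d\geq 3$, so the hypothesis of that theorem holds and it remains only to show $\mu_0\geq d-2$, since then $\tau>\frac1{d-2}\geq\frac1{\mu_0}$. To bound $\mu_0$ from (\ref{eq:muzero}) I would probe the admissible pairs $\rho=\mu\gamma$ with $\mu<d-2$ and let $\gamma\downarrow 0$. For a degenerate degree the rate function $R$ is infinite off $d$, so the infimum in (\ref{eq:defPsi}) is attained at $a_1=\gamma d$, $a_2=(1-\gamma)d$ and $\Psi(\gamma,\rho)=\phi(\gamma d,(1-\gamma)d;\rho)$. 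A Taylor expansion of this expression, together with $H(\gamma)=-\gamma\log\gamma+\gamma+o(\gamma)$, shows that the leading term of $\Psi(\gamma,\mu\gamma)-H(\gamma)$ is $\bigl(\tfrac12(\mu-d)+1\bigr)\gamma\log\gamma$. For $\mu<d-2$ this coefficient is negative while $\gamma\log\gamma<0$, so the difference is strictly positive for all sufficiently small $\gamma$. Hence every $\mu<d-2$ is realised as some ratio $\rho/\gamma$, giving $\mu_0\geq d-2$ and with it statement 1.

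For the second statement the plan follows the sparse Erd\H{o}s--R\'enyi argument (Lemma \ref{lem:lowerboundsparse} and Theorem \ref{thm:sparse}) step for step. First I would fix a fraction $\rho(\gamma)$ and aim at the uniform bound $L_S\geq M_{N,k}:=\rho(\gamma)\,d\,\gamma(1-\gamma)N$ for all $S$ with $|S|=k=\gamma N$ in a range around $\frac12$ (note $L_S=L_{S^c}$, so everything is symmetric in $\gamma\leftrightarrow 1-\gamma$). By Lemma \ref{lem:upperbound} the probability that a fixed such $S$ fails this is $e^{-N\Psi(\gamma,\rho(\gamma)d\gamma(1-\gamma))+o(N)}$, and after the union bound over the $\binom{N}{\gamma N}\leq e^{NH(\gamma)}$ sets the bound holds w.h.p. as soon as $\Psi(\gamma,\rho(\gamma)d\gamma(1-\gamma))>H(\gamma)$. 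Using $\Psi(\gamma,\rho)=\phi(\gamma d,(1-\gamma)d;\rho)$ and the scaling (\ref{eq:scalephi}) this reduces to a one-variable inequality whose tightest instance is $\gamma=\frac12$, where $H(\tfrac12)=\log2$ and it reads $\frac d4\bigl((2-\lambda)\log(2-\lambda)+\lambda\log\lambda\bigr)>\log2$ with $\lambda$ the fraction used at $\gamma=\frac12$. Balancing the left-hand side against $\log 2$ near $\lambda=1$ produces a correction of order $\sqrt{\log2/d}$, and this is exactly what fixes $\lambda_d=1-\sqrt{\log2/d}$.

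With the uniform bound secured I would invoke Proposition \ref{prop:uniform_bound} with $k_0=\lceil\gamma_0 N\rceil$ and $k_1=\lfloor\gamma_1 N\rfloor$, choosing $\gamma_1=1-\frac1{\tau d\lambda_d}$ so that every factor $\tau M_{N,k}/k=\tau d\,\rho(k/N)(1-k/N)$ exceeds $1$. Grouping the product as $(\tau d\lambda_d)^{k_1-k_0-1}\prod(1-k/N)\prod(\rho/\lambda_d)$ and passing to $\frac1N\log(\cdot)$, the power contributes $(\gamma_1-\gamma_0)\log(\tau d\lambda_d)$, while the Riemann sum for $\prod(1-k/N)$ is controlled by $\int_{1-\gamma_1}^{1}\log s\,ds$; with $\gamma_0\to 0$ and $1-\gamma_1=\frac1{\tau d\lambda_d}$ these two pieces assemble, exactly as in the complete-graph computation (\ref{eq:completegraph}), into $\log(\tau d\lambda_d)+\frac1{\tau d\lambda_d}-1$. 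The $\rho/\lambda_d$ fluctuation and the $\log(1/k_1)$ term are $o(N)$ and are absorbed into the factor $(1-\varepsilon)$.

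The crux of the whole argument is the uniform link bound. In the Erd\H{o}s--R\'enyi case $L_S$ is literally binomial and a single Chernoff estimate suffices; here $L_S$ is the number of mixed pairs in a random matching and is governed by the rate function $\phi$, so the estimate must be routed through Lemmas \ref{lemma:ldp_stubs}--\ref{lem:upperbound} and one must then control $\phi(\gamma d,(1-\gamma)d;\cdot)$ uniformly in $\gamma$. The delicate point I expect is that a \emph{constant} fraction of the mean that is admissible at the worst case $\gamma=\frac12$ is strictly below $\lambda_d$; recovering the stated base $\tau d\lambda_d$ forces the fraction $\rho(\gamma)$ to depend on $\gamma$ (closer to $1$ away from $\frac12$, in the spirit of the exponent $\alpha_\sigma$ of Lemma \ref{lem:lowerboundsparse}), after which one must check that $\frac1N\sum\log\rho(k/N)$ only costs $o(1)$ relative to the $\log(\tau d\lambda_d)$ base and that $\Psi(\gamma,\cdot)-H(\gamma)>0$ holds on the whole interval $(\gamma_0,\gamma_1)$. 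Verifying this optimisation, rather than the subsequent product estimate, is where the real work lies.
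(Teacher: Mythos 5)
Your proof of part 1 is correct and is essentially the paper's own proof: the paper also notes that for degenerate $D$ the rate function forces $a_1=\gamma d$, $a_2=(1-\gamma)d$, so $\Psi(\gamma,\rho)=\phi(\gamma d,(1-\gamma)d;\rho)$, expands near $\gamma=0$ (in the parametrization $\rho=\lambda\gamma(1-\gamma)d$, getting the condition $\lambda<\tfrac{d-2}{d}$, i.e.\ $\mu_0\geq d-2$), and invokes Theorem \ref{theorem:confmod}; your coefficient $\bigl(\tfrac12(\mu-d)+1\bigr)\gamma\log\gamma$ agrees with that computation.

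For part 2 you follow the paper's route (uniform link bound at a constant fraction of the mean, worst case at $\gamma=\tfrac12$, then Proposition \ref{prop:uniform_bound} with $\gamma_1=1-1/(\tau d\lambda_d)$), but there is a genuine gap at the one quantitative step that is supposed to produce $\lambda_d$. Your own display at $\gamma=\tfrac12$ is
\[
\frac d4\bigl((2-\lambda)\log(2-\lambda)+\lambda\log\lambda\bigr)>\log 2,
\]
and since $(2-\lambda)\log(2-\lambda)+\lambda\log\lambda=(1-\lambda)^2+O\bigl((1-\lambda)^3\bigr)$ near $\lambda=1$, balancing against $\log 2$ forces $1-\lambda>2\sqrt{\log(2)/d}$, not $\sqrt{\log(2)/d}$. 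So the correction is indeed of order $\sqrt{\log(2)/d}$, but with constant $2$: the admissible fraction at $\gamma=\tfrac12$ is asymptotically at most $1-2\sqrt{\log(2)/d}$, which is strictly \emph{below} the $\lambda_d$ of the statement, and the claim that the balance ``fixes $\lambda_d=1-\sqrt{\log 2/d}$'' is false. Your closing paragraph actually detects exactly this tension, but the repair you sketch cannot close it: a $\gamma$-dependent $\rho(\gamma)$ still cannot exceed $1-2\sqrt{\log(2)/d}$ at and near $\gamma=\tfrac12$, so the shortfall $\int\log\bigl(\rho(\gamma)/\lambda_d\bigr)\,d\gamma$ is a strictly negative constant depending only on $d$, which cannot be absorbed into $(1-\varepsilon)\log(\tau d\lambda_d)$ for fixed $d$ and arbitrary $\varepsilon>0$.

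In fairness, the paper's own proof contains the identical slip: from $H(x)\leq 2x(1-x)-\tfrac12+\log 2$ one gets $H(\lambda/2)\leq\log 2-\tfrac12(1-\lambda)^2$, so condition (\ref{eq:restriction2}) is guaranteed only for $\lambda\leq 1-2\sqrt{\log(2)/d}$, whereas the paper asserts $\lambda_0>1-\sqrt{\log(2)/d}$; this assertion is numerically false (e.g.\ for $d=100$, $H(\lambda_d/2)\approx 0.690$ exceeds $(1-\tfrac2d)\log 2\approx 0.679$, and for $d=3$ the true $\lambda_0\approx 0.12$ while $\lambda_d\approx 0.52$). So you have reproduced the paper's argument faithfully, including its flaw: what both arguments actually establish is statement 2 with $\lambda_d$ replaced by $1-2\sqrt{\log(2)/d}$, which preserves the intended asymptotics ($\lambda_d\to1$, base $\tau d(1-o(1))$) but not the stated constant. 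The remaining differences are harmless: the paper asserts that (\ref{eq:restriction2}) alone implies $\Psi(\gamma,\rho)>H(\gamma)$ for all $\gamma\in(0,1)$ and accordingly takes $k_0=1$, while you keep $k_0=\lceil\gamma_0N\rceil$ with $\gamma_0\to0$, which is, if anything, the more careful choice given that the large-deviation bound of Lemma \ref{lem:upperbound} is asymptotic at fixed $\gamma$.
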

\begin{proof}
To get a bound on $\tau$, we determine $\mu_0$ as defined in (\ref{eq:muzero}). We see that $\Gamma=(0,\frac12)$ and 
\begin{align*}
\Psi(\gamma,\rho) & = \frac12d\log(d) - \gamma d\log(\gamma d) - (1-\gamma)d\log((1-\gamma)d) + \frac12(\gamma d-\rho)\log(\gamma d-\rho) +\\
&\ \  \,\, +  \frac12((1-\gamma)d-\rho)\log((1-\gamma)d-\rho) + \rho\log(\rho).
\end{align*}
Define $\rho=\lambda\gamma(1-\gamma)d$, for $\lambda\in [0,1]$. Note that $\gamma(1-\gamma)d N$ is the expected number of links between a set of size $\gamma N$ and its complement. With this parametrization, $\Psi$ simplifies to
\[
\Psi(\gamma,\rho) =  \frac{d}{2}\Bigl(H(\gamma)-\gamma H(\lambda(1-\gamma))-(1-\gamma)H(\lambda\gamma)\Bigr).
\]

For $\gamma$ close to zero, the leading order term in $H(\gamma)$ is $-\gamma\log(\gamma)$ and we find
\[
\Psi(\gamma,\rho) = \frac{d}{2}(H(\gamma)-H(\lambda\gamma))+\mathcal{O}(\gamma) = \frac{d}{2}(1-\lambda)H(\gamma)+\mathcal{O}(\gamma).
\]
In order to have $\Psi(\gamma,\rho)>H(\gamma)$ in a neighborhood of zero, we need 
\begin{align}\label{eq:restriction1}
\lambda<\frac{d-2}{d}.
\end{align}
Since $\lim_{\gamma\downarrow 0} \rho/\gamma = \lambda d$, we find that $\mu_0\geq d-2$. By Theorem \ref{theorem:confmod}, if we have a configuration model graph with constant degree $3$ or higher, and $\tau>1/(d-2)$, then we will have an exponentially growing expected extinction time.

Next we aim at a more explicit lower bound for $\mathbb{E}[T_N]$. We wish to find $\lambda$ such that $\Psi(\gamma,\rho)>H(\gamma)$ for all $\gamma\in (0,1)$. For $\gamma = \frac12$, we obtain
\[
\Psi(\frac12,\rho) = \frac{d}{2}\left(\log(2)-H(\frac{\lambda}{2})\right),
\]
and the corresponding condition on $\lambda$ is
\begin{align}\label{eq:restriction2}
H(\frac \lambda 2) < (1-\frac 2 {d}) H(\frac12) = (1-\frac 2 {d})  \log(2).
\end{align}
It turns out that if (\ref{eq:restriction2}) is satisfied, then $\Psi(\gamma,\rho)>H(\gamma)$ for all $\gamma\in (0,1)$.  For each $d\geq 3$, there exists a unique maximal $\lambda_0\in (0,1)$ such that this inequality holds for $\lambda<\lambda_0$. Moreover, if the degree $d$ is large, $\lambda_0$ will be close to 1, meaning that we can choose $\rho N$ close to the expected number of links. Unfortunately, $\lambda_0$ can not be calculated explicitly, so we will use that for $x\in [0,1]$
\[
H(x)\leq 2x(1-x)-\frac12+\log(2).
\]
This gives us that $\lambda_0 > 1-\sqrt{\log(2)/d}=\lambda_{d}$, which still goes to 1 for large $d$. Summarizing, we conclude that with high probability for each $\gamma\in (0,1)$ and each set $S$ of size $\gamma N$, the number of outgoing links is at least $\lambda_{d}\gamma(1-\gamma)d N$.

By Proposition \ref{prop:uniform_bound}, for $1<k_0<k_1\leq N$,
\[
\mathbb{E}[T_N] \geq \frac 1 k_1\prod_{k=k_0+1}^{k_1-1} \tau d\lambda_d\left(1-\frac k N\right).
\]
We choose $k_0 = 1$ and $k_1 = \lfloor (1-1/(\tau d\lambda_d))N\rfloor$. The second claim of the theorem then follows analogously to the proof of Theorem \ref{theorem:ERdense}.
\end{proof}

\subsubsection*{Poisson degree distribution}

Now consider $D\sim {\rm Pois}(\mu)$. First note that
\[
\mathbb{E}[e^{\lambda D}] = e^{\mu(e^\lambda-1)},
\]
so that 
\[
\mathbb{E}[2^{-\frac12 D}] = e^{\mu(\frac{1}{\sqrt 2}-1)}<\frac12 \qquad \text{if}\qquad \mu>\frac{\log(4)}{2-\sqrt{2}}\approx 2.36.
\]
By Theorem \ref{theorem:confmod}, if $\mu$ satisfies this condition and $\tau$ is large enough, the expected extinction time will be exponential in $N$. We can even improve this lower bound on $\mu$ by only looking at non-isolated nodes in the graph. They constitute again a configuration model graph with degree distribution ${\rm Pois}(\mu)$, but now conditioned on being non-zero. This gives
\[
\mathbb{E}[2^{-\frac12 D}\mid D\geq 1] = \frac{e^{\frac{\mu}{\sqrt 2}}-1}{e^\mu-1}<\frac12 \qquad \text{if}\qquad \mu>1.88.
\] 
To simplify our calculations, we will from now on work with the unconditioned degree distribution. The rate function for $D$ is given by
\[
R(x) = \sup_{\lambda\in\mathbb{R}}(\lambda x-\log(\mathbb{E}[e^{\lambda D}])) = 
\left\{\begin{array}{ll}
\infty&x<0,\\ x\log(\frac x \mu)-x+\mu & x\geq 0. 
\end{array}\right.
\]
To find $\Psi(\gamma,\rho)$, we minimize the function
\[
\psi(a_1,a_2):= \phi(a_1,a_2;\rho)+\gamma R(a_1/\gamma)+(1-\gamma) R(a_2/(1-\gamma))
\]
as in (\ref{eq:defPsi}). It turns out that $\psi$ is convex in $a_1$ and $a_2$ and setting the partial derivatives equal to zero gives the equations
\[
\left\{\begin{array}{ll}
(a_1+a_2)(a_1-\rho) = \mu^2\gamma^2,\\
(a_1+a_2)(a_2-\rho) = \mu^2(1-\gamma)^2.
\end{array}
\right.
\]
The solutions are
\[
a_1 = \rho+\frac{\mu^2\gamma^2}{\sqrt{\rho^2+\mu^2(\gamma^2+(1-\gamma)^2)}+\rho},\qquad a_2 = \rho+\frac{\mu^2(1-\gamma)^2}{\sqrt{\rho^2+\mu^2(\gamma^2+(1-\gamma)^2)}+\rho}.
\]
Choosing $\rho = \lambda\gamma(1-\gamma)\mu$ for $\lambda\in [0,1]$, it follows after some calculations that 
\[
\Psi(\gamma,\rho) = \mu\cdot\bigl(1-s(\gamma)+\lambda\gamma(1-\gamma)\log(s(\gamma)) \bigr),
\]
where
\[
s(\gamma) :=   \lambda\gamma(1-\gamma)+\sqrt{\lambda^2\gamma^2(1-\gamma)^2+\gamma^2+(1-\gamma)^2}.
\]
Since $\Psi$ increases linearly in $\mu$, for every allowed combination of $\gamma$ and $\rho$ there exists $\mu$ large enough such that $\Psi(\gamma,\rho)>H(\gamma)$. In particular, if $\mu$ is large, we can choose $\lambda(1-\gamma)$ close to 1, so that $\mu_0$ is slightly smaller than $\mu$. By Theorem \ref{theorem:confmod}, we will have exponential expected extinction time if the infection rate $\tau$ is slightly larger than $1/\mu$. 

The next proposition gives more explicit results on the minimal infection rate and the rate of exponential growth of the extinction time. Our explicit calculations turn out to work for $\mu>\frac{8\log(2)}{2-\log(2)}\approx 4.24$.

\begin{proposition} Let $T_N$ be the extinction time of the contact process $(I_t)$ on a configuration model graph $G=(V,E)$ with $\text{Pois}(\mu)$ degree distribution and $I_0=V$. 
	\begin{enumerate}
		\item Suppose  $e^\mu-1>2e^{\frac{\mu}{\sqrt 2}}-2$. Then there exists an infection rate $\tau$ and constant $c>0$ such that w.h.p. $\mathbb{E}[T_N]>e^{cN}$.
		\item Suppose $\mu>8\log(2)/(2-\log(2))$. The functions $f(\mu)$ and $g(\tau,\mu)$, explicitly given in (\ref{eq:funcf}) and (\ref{eq:funcg}), satisfy
		\[
		f(\mu) = \frac{1+o(1)}{\mu}>\frac{1}{\mu},\qquad g(\tau,\mu) = (1-o(1))\log(\tau\mu)+\frac{1+o(1)}{\tau\mu}-1
		\]
		(asymptotics for $\mu\rightarrow\infty$) and are such that w.h.p.
		\[
		\mathbb{E}[T_N] \geq e^{g(\tau,\mu)\cdot N}
		\]
		whenever $\tau>f(\mu)$.
	\end{enumerate} 
\end{proposition}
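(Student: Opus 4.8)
The plan is to treat the two claims separately: the first follows from Theorem \ref{theorem:confmod} applied to a thinned graph, and the second is a direct adaptation of the argument of Theorem \ref{thm:sparse} in which the variable factor $\rho(\gamma)$ is replaced by a tunable constant $\lambda$. For the first claim I would begin with the elementary equivalence
\[
e^\mu-1>2e^{\mu/\sqrt2}-2\iff \frac{e^{\mu/\sqrt2}-1}{e^\mu-1}=\mathbb{E}\bigl[2^{-\frac12 D}\mid D\geq 1\bigr]<\frac12,
\]
and then pass to the subgraph spanned by the non-isolated nodes. Since degree-$0$ vertices carry no stubs, removing them leaves exactly a configuration model whose degree law $D'$ is $\mathrm{Pois}(\mu)$ conditioned to be positive, on $M=(1-e^{-\mu}+o(1))N=\Theta(N)$ vertices w.h.p.; moreover the contact process on the full graph restricted to these vertices is precisely the contact process on the subgraph, so its extinction time lower-bounds $T_N$. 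The displayed equivalence is the hypothesis $\mathbb{E}[2^{-\frac12 D'}]<\frac12$ of Theorem \ref{theorem:confmod}, so for any $\tau$ exceeding the corresponding $1/\mu_0(D')$ that theorem gives $\mathbb{E}[T_M]>e^{c'M}$ w.h.p., and $M=\Theta(N)$ upgrades this to $\mathbb{E}[T_N]>e^{cN}$.

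For the second claim I would retain the parametrisation $\rho=\lambda\gamma(1-\gamma)\mu$, $\lambda\in[0,1]$, under which $\Psi(\gamma,\rho)=\mu\bigl(1-s(\gamma)+\lambda\gamma(1-\gamma)\log s(\gamma)\bigr)$ with $s(\gamma)=\lambda\gamma(1-\gamma)+\sqrt{\lambda^2\gamma^2(1-\gamma)^2+\gamma^2+(1-\gamma)^2}$. The first task is to locate the largest symmetric interval $(\gamma_0,1-\gamma_0)\ni\frac12$ on which $\Psi(\gamma,\rho)>H(\gamma)$. Because $\mathbb{P}(D\leq 1)>0$ here, $\Psi/H\to0$ as $\gamma\to0$, so—unlike the constant-degree case—this interval is necessarily bounded away from $0$, and its existence is governed by the single inequality
\[
\Psi\bigl(\tfrac12,\rho\bigr)=\mu\Bigl(1-s(\tfrac12)+\tfrac{\lambda}{4}\log s(\tfrac12)\Bigr)>H\bigl(\tfrac12\bigr)=\log2,\qquad s(\tfrac12)=\tfrac{\lambda}{4}+\sqrt{\tfrac{\lambda^2}{16}+\tfrac12}.
\]
To make this explicit and to exhibit an admissible $\lambda=\lambda(\mu)$ I would invoke the quadratic bound $H(x)\leq 2x(1-x)-\tfrac12+\log2$ exactly as in the constant-degree computation; the threshold $\mu>8\log2/(2-\log2)$ is the point at which this elementary estimate first certifies a feasible $\lambda<1$, and it permits taking $\lambda(\mu)\to1$ as $\mu\to\infty$.

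With the uniform bound $M_{N,k}=\rho N=\lambda\gamma(1-\gamma)\mu N$ then available w.h.p.\ on $(\gamma_0,1-\gamma_0)$ through Lemma \ref{lem:upperbound} and Lemma \ref{lem:uniform_configuration}, I would apply Proposition \ref{prop:uniform_bound}. Since $\tau M_{N,k}/k=\tau\lambda\mu\,(1-k/N)$, the resulting product is formally identical to the one in Theorem \ref{thm:sparse} with $\sigma$ replaced by $\mu$ and the factor $\rho(\gamma)$ replaced by the constant $\lambda$. Choosing $k_0=\lceil\gamma_0 N\rceil$ and $k_1=\lfloor\min(1-\gamma_0,\,1-1/(\tau\lambda\mu))N\rfloor$ makes every factor exceed $1$; the window is non-empty precisely when $\tau>f(\mu):=1/\bigl(\lambda\mu(1-\gamma_0)\bigr)$, which defines the first stated function. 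Taking logarithms, replacing the sum by $\int\log(1-\gamma)\,d\gamma$ and evaluating verbatim as in Theorem \ref{thm:sparse} yields the explicit rate $g(\tau,\mu)$, whose leading part is $(\gamma_1-\gamma_0)\log(\tau\lambda\mu)+\int_{1-\gamma_1}^{1}\log(s)\,ds$.

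Finally I would read off the asymptotics: $\lambda(\mu)\to1$ and $\gamma_0\to0$ give $f(\mu)=(1+o(1))/\mu$, and substituting $\gamma_1=1-1/(\tau\lambda\mu)$ collapses the leading terms to $\log(\tau\mu)+1/(\tau\mu)-1$ up to $o(1)$, matching the claimed shape of $g$. The main obstacle I anticipate is the reduction in the second paragraph: justifying that $\Psi(\gamma,\rho)>H(\gamma)$ on the whole interval follows from the single value at $\gamma=\tfrac12$ (i.e.\ a single-crossing property of $\Psi(\cdot,\rho)-H$), since the irrational term $s(\gamma)$ makes monotonicity and concavity in $\gamma$ awkward to control. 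A secondary difficulty is bookkeeping the several $o(1)$ contributions—from $\log\lambda$, from $\gamma_0$, and from the $1/k_1$ prefactor—tightly enough to obtain $f(\mu)=(1+o(1))/\mu$ together with the strict inequality $f(\mu)>1/\mu$.
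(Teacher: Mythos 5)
Your treatment of the first claim matches the paper's: the identity $\mathbb{E}[2^{-\frac12 D}\mid D\geq 1]=(e^{\mu/\sqrt2}-1)/(e^\mu-1)$, the passage to the subgraph of non-isolated nodes (again a configuration model, with the conditioned Poisson law), and an application of Theorem \ref{theorem:confmod} is exactly how the paper disposes of part 1. Your setup for the second claim is also the paper's: the parametrisation $\rho=\lambda\gamma(1-\gamma)\mu$, the formula for $\Psi$ in terms of $s(\gamma)$, the quadratic bound $H(x)\leq 2x(1-x)-\tfrac12+\log 2$, the window $(\gamma_0,\gamma_1)$ with $\gamma_1=\min(1-\gamma_0,1-1/(\tau\lambda\mu))$, and the final bookkeeping via Proposition \ref{prop:uniform_bound}.

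However, the step you yourself flag as ``the main obstacle'' is a genuine gap, and it is precisely the step where the paper's proof does real work. You propose to deduce $\Psi(\gamma,\rho)>H(\gamma)$ on all of $(\gamma_0,1-\gamma_0)$ from the single inequality at $\gamma=\tfrac12$ via a single-crossing property of $\Psi(\cdot,\rho)-H$; no such property is proved (and it is not clear how to prove it directly, exactly because of the irrational term $s(\gamma)$). The paper never argues this way. Instead it eliminates $\gamma$ in favour of the single variable $t=\gamma(1-\gamma)$: it sandwiches $1-c_1 t\leq s(\gamma)\leq 1-c_2 t$ with $c_1=4-\lambda-\sqrt{\lambda^2+8}$, $c_2=1-\lambda$, bounds $\log s(\gamma)\geq c_3\bigl(1-s(\gamma)\bigr)$ using $\min_\gamma s(\gamma)=\tfrac{\lambda}{4}+\tfrac14\sqrt{\lambda^2+8}$, and combines these (using $c_3<0$ and $c_1c_3\geq -2\log(2)c_2$) into
\[
\Psi\bigl(\gamma,\lambda\gamma(1-\gamma)\mu\bigr)\;\geq\;(1-\lambda)\mu\, t\,\bigl(1-2\log(2)\lambda t\bigr),
\]
a function of $t$ alone. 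Comparing this with $H(\gamma)\leq 2t-\tfrac12+\log 2$ turns the uniform inequality into a quadratic inequality in $t$, solved explicitly; its roots give $\gamma_0$ and $1-\gamma_0$, and the explicit choice $\lambda=1-c_4/\sqrt{\mu}$ with $c_4=\sqrt{8\log(2)/(2-\log(2))}$ is what produces the stated threshold $\mu>c_4^2$ and the asymptotics $\gamma_0=O(1/\sqrt{\mu})$, hence $f(\mu)=(1+o(1))/\mu$. Without this (or an equivalent) reduction, your argument only establishes the link bound at $\gamma=\tfrac12$, which is not enough to run Proposition \ref{prop:uniform_bound} over a window of width $\Theta(N)$, and the second claim does not follow. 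Relatedly, your proposal leaves $\lambda(\mu)$ unspecified beyond ``feasible and tending to $1$,'' whereas the explicit $\lambda=1-c_4/\sqrt{\mu}$ is needed to make $f$ and $g$ the concrete functions the proposition refers to.
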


\begin{proof} The first statement follows from the discussion above. To obtain the second result, we continue by bounding $s(\gamma)$. Straightforward calculations show that
\[
1-c_1\gamma(1-\gamma) \leq s(\gamma) \leq 1-c_2\gamma(1-\gamma),
\]
where $c_1 = 4-\lambda-\sqrt{\lambda^2+8}$ and $c_2 = 1-\lambda$. Furthermore, using that 
\[
\min_{\gamma\in [0,1]}s(\gamma) = \frac{\lambda}{4}+\frac14\sqrt{\lambda^2+8}\qquad\text{and}\qquad\log(x)\geq \frac{(1-x)\log(a)}{1-a}
\]
for $0<a\leq x\leq 1$, we obtain 
\[
\log(s(\gamma))\geq (1-s(\gamma))\cdot\frac{\log\Bigl((\lambda+\sqrt{\lambda^2+8})/4\Bigr)}{1-(\lambda+\sqrt{\lambda^2+8})/4} =: (1-s(\gamma))\cdot c_3.
\]
Combining these bounds gives (note that $c_3<0$ and $c_1c_3\geq -2\log(2)c_2$)
\begin{align*}
\Psi(\gamma,\lambda\gamma(1-\gamma)\mu)&\geq \mu\Bigl(1-(1-c_2\gamma(1-\gamma))+c_3\lambda\gamma(1-\gamma)(1-s(\gamma)\Bigr)\\
& \geq \mu\Bigl(c_2\gamma(1-\gamma)+c_1c_3\lambda\gamma^2(1-\gamma)^2\Bigr)\\
& \geq (1-\lambda)\mu\gamma(1-\gamma)\cdot\Bigl(1-2\log(2)\lambda\gamma(1-\gamma)\Bigr). 
\end{align*}
Now suppose 
\[
\lambda = 1-\frac{c_4}{\sqrt{\mu}}, \qquad \text{with}\qquad c_4 = \sqrt{\frac{8\log(2)}{2-\log(2)}}.
\]
Then for all $\mu> c_4^2$, we have $\lambda\in (0,1)$ and
\[
\Psi\left(\frac12,\frac{\lambda\mu}{4}\right) \geq \frac{2-\log(2)}{8}c_4\sqrt{\mu}>\log(2) = H\left(\frac12\right),
\]  
giving the desired inequality $\Psi(\gamma,\rho)>H(\gamma)$ for $\gamma=\frac12$. Solving the equation
\[
 (1-\lambda)\mu\gamma(1-\gamma)\cdot\Bigl(1-2\log(2)\lambda\gamma(1-\gamma)\Bigr) = 2\gamma(1-\gamma)-\frac12+\log(2)
\]
gives 
\[
\gamma(1-\gamma) = \frac{(1-\lambda)\mu-2-\sqrt{((1-\lambda)\mu-2)^2-\log(2)(8\log(2)-4)\lambda(1-\lambda)\mu}}{4\log(2)\lambda(1-\lambda)\mu},
\]
so that we obtain solutions $\gamma_0\in (0,\frac12)$ and $1-\gamma_0$ given by
\[
 \frac12\pm \sqrt{\frac{(\log(2)\lambda-1)(1-\lambda)\mu+2+\sqrt{((1-\lambda)\mu-2)^2-\log(2)(8\log(2)-4)\lambda(1-\lambda)\mu}}{4\log(2)\lambda(1-\lambda)\mu}}.
\]
For large $\mu$, the two solutions approach $0$ and $1$:
\[
\gamma_0 =  \left(\frac{\log(2)}{2}-\frac14\right)\sqrt{\frac{2-\log(2)}{2\log(2)}}\cdot\frac{1}{\sqrt{\mu}} + o\left(\frac{1}{\sqrt{\mu}}\right).
\]
We conclude that for all $\mu>c_4^2$, we can choose $\lambda\in(0,1)$ and $0<\gamma_0<\frac12$ such that $\Psi(\gamma,\lambda\gamma(1-\gamma)\mu)>H(\gamma)$ for all $\gamma\in(\gamma_0,1-\gamma_0)$. Consequently, with high probability all sets of size $\gamma N$ with $\gamma\in(\gamma_0,1-\gamma_0)$ will have at least $\lambda\gamma(1-\gamma)\mu N$ links to the complement. 

Finally, we bound the expected extinction time.  Suppose 
\begin{equation}\label{eq:funcf}
\tau > f(\mu) := \frac{1}{(1-\gamma_0)(\sqrt{\mu}-c_4)\sqrt{\mu}},
\end{equation}
and define
\[ \gamma_1 := \min\left\{1-\gamma_0,1-\frac{1}{(\sqrt{\mu}-c_4)\sqrt{\mu}\tau}\right\}\in(\gamma_0,1-\gamma_0].
\]
Choosing $k_0 = \lceil \gamma_0 N \rceil$ and $k_1 = \lfloor\gamma_1N\rfloor$ and $\varepsilon>0$ arbitrary, by Proposition \ref{prop:uniform_bound}, w.h.p.
\begin{align*}
\mathbb{E}[T_N] &\geq \frac 1 k_1 \prod_{k=k_0+1}^{k_1-1} \tau (\sqrt{\mu}-c_4)\sqrt{\mu}(1-\frac k N)\nonumber\\
& \geq \frac 1 N (\tau(\sqrt{\mu}-c_4)\sqrt{\mu})^{(\gamma_1-\gamma_0)N-3}e^{N\int_{1-\gamma_1}^{1}\log(s)ds}\nonumber\\
& = \exp\left(\left(\log(\tau(\sqrt{\mu}-c_4)\sqrt{\mu})(\gamma_1-\gamma_0)+\int_{1-\gamma_1}^{1}\log(s)ds-\frac{\log(N)-3}{N}\right)\cdot N\right)\nonumber\\
& \geq e^{g(\tau,\mu)\cdot N},
\end{align*}
where 
\begin{equation}\label{eq:funcg}
g(\tau,\mu):=\log(\tau(\sqrt{\mu}-c_4)\sqrt{\mu})(\gamma_1-\gamma_0)+\int_{1-\gamma_1}^{1}\log(s)ds-\frac{\varepsilon}{\mu}.
\end{equation}
This function is increasing in both $\mu$ and $\tau$ and for small enough $\varepsilon$ it satisfies $g(\tau,\mu)>0$ if $\tau>f(\mu)$. Furthermore, for $\mu\rightarrow\infty$\
\[
g(\tau,\mu) = (1-o(1))\log(\tau\mu)+\frac{1+o(1)}{\tau\mu}-1,
\]
completing the proof.
\end{proof}

So it turns out that for large $\mu$, we will have exponential extinction time for $\tau$ slightly larger than $1/\mu$, just like in the critical Erd\H{o}s-R\'enyi graph with large average degree. Interestingly, our method proves existence of $\tau$ giving exponential growth if the mean degree exceeds 1.88. In the Erd\H{o}s-R\'enyi case we needed a stronger condition on the average degree as it had to be greater than $4\log(2)\approx 2.77$. This illustrates the fact that more subtle methods are needed to get good results if the average degree is close to 1.

We would like to mention that the method we have shown here is not able to predict that for heavy tailed degree distributions, we will have exponential extinction time for any $\tau > 0$. Also, we do not claim that our bounds for $\tau$ are optimal: this would require further research. We already know that our conditions for the existence of $\tau>0$ with exponential extinction time are not optimal, thanks to the results in \cite{BNNS19}, but they are not able to give explicit bounds for such $\tau$.

\bibliographystyle{plain}
\bibliography{Contact_Process.bib}

\end{document}